\tikzstyle{NE-lines}=[pattern=north east lines, pattern color=black!45]
\numberwithin{equation}{section}
\newcommand{\machine}[9]{\begin{tikzpicture}[scale=0.55]
\draw[thick] (0,0) -- (1,0) -- (1,-2.5) -- (2,-2.5) -- (2,0) -- (3.5,0) -- (3.5,-2.5) -- (4.5,-2.5) -- (4.5, 0) -- (5.7, 0);
\node[fill = white, draw = white] at (5,.5) {#1};
\node[fill = white, draw = white] at (5,-0.8) {\tiny 1};
\node[fill = white, draw = white] at (5,-1.4) {\tiny3};
\node[fill = white, draw = white] at (5,-2) {\tiny2};
\node[fill = white, draw = white] at (5.5,-0.8) {\tiny3};
\node[fill = white, draw = white] at (5.5, -1.4) {\tiny2};
\node[fill = white, draw = white] at (5.5,-2) {\tiny1};
\node[fill = white, draw = white] at (2.4,-1.1) {\tiny2};
\node[fill = white, draw = white] at (2.4,-1.7) {\tiny1};
\node[fill = white, draw = white] at (4,-2) {#4};
\node[fill = white, draw = white] at (4,-1.3) {#3};
\node[fill = white, draw = white] at (4,-.6) {#2};
\node[fill = white, draw = white] at (2.75,.5) {#5};
\node[fill = white, draw = white] at (1.5,-2) {#8};
\node[fill = white, draw = white] at (1.5,-1.3) {#7};
\node[fill = white, draw = white] at (1.5,-.6) {#6};
\node[fill = white, draw = white] at (0.5,.5) {#9};
\node[fill = white, draw = white] at (6.75,-1.4) {$\rightarrow$};
\end{tikzpicture}}
\newcommand{\machineend}[9]{\begin{tikzpicture}[scale=0.55]
\draw[thick] (0,0) -- (1,0) -- (1,-2.5) -- (2,-2.5) -- (2,0) -- (3.5,0) -- (3.5,-2.5) -- (4.5,-2.5) -- (4.5, 0) -- (5.7, 0);
\node[fill = white, draw = white] at (5,.5) {#1};
\node[fill = white, draw = white] at (5,-0.8) {\tiny 1};
\node[fill = white, draw = white] at (5,-1.4) {\tiny3};
\node[fill = white, draw = white] at (5,-2) {\tiny2};
\node[fill = white, draw = white] at (5.5,-0.8) {\tiny3};
\node[fill = white, draw = white] at (5.5, -1.4) {\tiny2};
\node[fill = white, draw = white] at (5.5,-2) {\tiny1};
\node[fill = white, draw = white] at (2.4,-1.1) {\tiny2};
\node[fill = white, draw = white] at (2.4,-1.7) {\tiny1};
\node[fill = white, draw = white] at (4,-2) {#4};
\node[fill = white, draw = white] at (4,-1.3) {#3};
\node[fill = white, draw = white] at (4,-.6) {#2};
\node[fill = white, draw = white] at (2.75,.5) {#5};
\node[fill = white, draw = white] at (1.5,-2) {#8};
\node[fill = white, draw = white] at (1.5,-1.3) {#7};
\node[fill = white, draw = white] at (1.5,-.6) {#6};
\node[fill = white, draw = white] at (0.5,.5) {#9};
\end{tikzpicture}}
\theoremstyle{definition}
\newtheorem{theorem}{Theorem}[section]
\newtheorem*{theorem*}{Theorem}
\newtheorem*{example*}{Example}
\newtheorem{lemma}[theorem]{Lemma}
\newtheorem*{lemma*}{Lemma}
\newtheorem{corollary}[theorem]{Corollary}
\newtheorem*{corollary*}{Corollary}
\newtheorem*{definition*}{Definition}
\newtheorem{proposition}[theorem]{Proposition}
\newtheorem*{proposition*}{Proposition}
\newtheorem*{remark*}{Remark}
\newtheorem{conjecture}[theorem]{Conjecture}
\newcommand{\Av}{\mathrm{Av}}
\newcommand{\indx}{\mathrm{ind}} 
\newcommand{\smallx}{\mathrm{small}} 
\newcommand{\Act}{\mathrm{Ac}} 
\newcommand{\Sort}{\mathrm{Sort}} 
\title[On a conjecture on pattern-avoiding machines]{On a conjecture on pattern-avoiding machines}
\author{Christopher Bao}\address{\textsc{C. Bao}, The Davidson Academy, Reno, NV, 89507} \email{christopherbao@yahoo.com}
\author{Giulio Cerbai}\address{\textsc{G. Cerbai}, University of Iceland, Iceland} \email{giulio@hi.is}
\author{Yunseo Choi}\address{\textsc{Y. Choi}, Harvard University,
    Cambridge, MA, 02138} \email{ychoi@college.harvard.edu}
\author{Katelyn Gan}\address{\textsc{K. Gan}, Sage Hill School,
    Newport Beach, CA, 92657} \email{katelyngan77@gmail.com}
\author{Owen Zhang}\address{\textsc{O. Zhang}, Interlake High School,
    Bellevue, WA, 98008} \email{owenzhang128@gmail.com}
\begin{document}

\begin{abstract}
Let $s$ be West's stack-sorting map, and let $s_{T}$ be the generalized stack-sorting map, where instead of being required to increase, the stack avoids subpermutations that are order-isomorphic to any permutation in the set $T$. In 2020, Cerbai, Claesson, and Ferrari introduced the $\sigma$-machine $s \circ s_{\sigma}$ as a generalization of West's $2$-stack-sorting-map $s \circ s$. As a further generalization, in 2021, Baril, Cerbai, Khalil, and Vajnovski introduced the $(\sigma, \tau)$-machine $s \circ s_{\sigma, \tau}$ and enumerated $|\Sort_{n}(\sigma,\tau)|$---the number of permutations in $S_n$ that are mapped to the identity by the $(\sigma, \tau)$-machine---for six pairs of length $3$ permutations $(\sigma, \tau)$. In this work, we settle a conjecture by Baril, Cerbai, Khalil, and Vajnovski on the only remaining pair of length $3$ patterns $(\sigma, \tau) = (132, 321)$ for which $|\Sort_{n}(\sigma, \tau)|$ appears in the OEIS. In addition, we enumerate $|\Sort_n(123, 321)|$, which does not appear in the OEIS, but has a simple closed form. 
\end{abstract}

\maketitle

\section{Introduction}\label{intro}

In 1990, Julian West~\cite{WEST1990} introduced the stack-sorting map $s$, a deterministic variation of Knuth's~\cite{knuth1997art} stack-sorting machine. In West's stack-sorting map $s$, the input permutation is sent through a stack in a right-greedy manner such that the stack is increasing from top to bottom (see for example, \Cref{Westmap}).

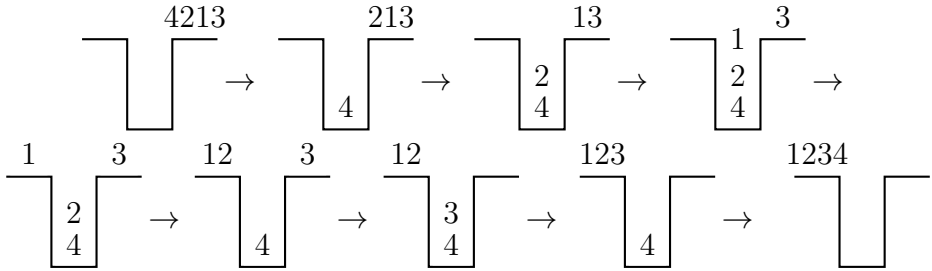
\begin{figure}
\begin{center}
\begin{tikzpicture}[scale=0.6]
				\draw[thick] (0,0) -- (1,0) -- (1,-2) -- (2,-2) -- (2,0) -- (3,0);
				\node[fill = white, draw = white] at (2.5,.5) {4213};
				\node[fill = white, draw = white] at (1.5,-1.5) {};
				\node[fill = white, draw = white] at (1.5,-.8) {};
				\node[fill = white, draw = white] at (3.5,-1) {$\rightarrow$};
              \end{tikzpicture}
              \begin{tikzpicture}[scale=0.6]
				\draw[thick] (0,0) -- (1,0) -- (1,-2) -- (2,-2) -- (2,0) -- (3,0);
				\node[fill = white, draw = white] at (2.5,.5) {213};
				\node[fill = white, draw = white] at (1.5,-1.5) {4};
                \node[fill = white, draw = white] at (1.5,-.8) {};
				\node[fill = white, draw = white] at (.5,.5) {};
             \node[fill = white, draw = white] at (3.5,-1) {$\rightarrow$}; \end{tikzpicture}
         \begin{tikzpicture}[scale=0.6]
				\draw[thick] (0,0) -- (1,0) -- (1,-2) -- (2,-2) -- (2,0) -- (3,0);
				\node[fill = white, draw = white] at (2.5,.5) {13};
				\node[fill = white, draw = white] at (1.5,-1.5) {4};
				\node[fill = white, draw = white] at (1.5,-.8) {2};
				\node[fill = white, draw = white] at (3.5,-1) {$\rightarrow$};
              \end{tikzpicture} 
              \begin{tikzpicture}[scale=0.6]
				\draw[thick] (0,0) -- (1,0) -- (1,-2) -- (2,-2) -- (2,0) -- (3,0);
				\node[fill = white, draw = white] at (2.5,.5) {3};
				\node[fill = white, draw = white] at (1.5,-1.5) {4};
				\node[fill = white, draw = white] at (1.5,-.8) {2};
                \node[fill = white, draw = white] at (1.5,0) {1};
             	\node[fill = white, draw = white] at (3.5,-1) {$\rightarrow$}; \end{tikzpicture}
              \begin{tikzpicture}[scale=0.6]
				\draw[thick] (0,0) -- (1,0) -- (1,-2) -- (2,-2) -- (2,0) -- (3,0);
				\node[fill = white, draw = white] at (2.5,.5) {3};
				\node[fill = white, draw = white] at (1.5,-1.5) {4};
                 \node[fill = white, draw = white] at (1.5,-.8) {2};
				\node[fill = white, draw = white] at (.5,.5) {1};
             	\node[fill = white, draw = white] at (3.5,-1) {$\rightarrow$}; \end{tikzpicture}\begin{tikzpicture}[scale=0.6]
				\draw[thick] (0,0) -- (1,0) -- (1,-2) -- (2,-2) -- (2,0) -- (3,0);
				\node[fill = white, draw = white] at (2.5,.5) {3};
				\node[fill = white, draw = white] at (1.5,-1.5) {4};
                 \node[fill = white, draw = white] at (1.5,-.8) {};
                 \node[fill = white, draw = white] at (1.5,-.1) {};
				\node[fill = white, draw = white] at (.5,.5) {12};
             	\node[fill = white, draw = white] at (3.5,-1) {$\rightarrow$}; \end{tikzpicture}\begin{tikzpicture}[scale=0.6]
				\draw[thick] (0,0) -- (1,0) -- (1,-2) -- (2,-2) -- (2,0) -- (3,0);
				\node[fill = white, draw = white] at (2.5,.5) {};
				\node[fill = white, draw = white] at (1.5,-1.5) {4};
                 \node[fill = white, draw = white] at (1.5,-.8) {3};
				\node[fill = white, draw = white] at (.5,.5) {12};
             	\node[fill = white, draw = white] at (3.5,-1) {$\rightarrow$}; \end{tikzpicture}\begin{tikzpicture}[scale=0.6]
				\draw[thick] (0,0) -- (1,0) -- (1,-2) -- (2,-2) -- (2,0) -- (3,0);
				\node[fill = white, draw = white] at (2.5,.5) {};
				\node[fill = white, draw = white] at (1.5,-1.5) {4};
                 \node[fill = white, draw = white] at (1.5,-.8) {};
				\node[fill = white, draw = white] at (.5,.5) {123};
             	\node[fill = white, draw = white] at (3.5,-1) {$\rightarrow$}; \end{tikzpicture}
              \begin{tikzpicture}[scale=0.6]
				\draw[thick] (0,0) -- (1,0) -- (1,-2) -- (2,-2) -- (2,0) -- (3,0);
				\node[fill = white, draw = white] at (.5,.5) {1234};\end{tikzpicture}
\end{center}
\caption{West's stack-sorting map $s$ on $x = 4213$.}
\label{Westmap}
\end{figure}

Since then, the stack-sorting map $s$ has been generalized to $s_{\sigma}$ for permutations $\sigma$~\cite{CERBAI2020105230}. Like $s$, the map $s_{\sigma}$ sends a permutation through a stack in a right-greedy manner. However, instead of insisting that the stack increases (i.e., avoids subsequences that are order-isomorphic to $21$ when as usual, is read from top to bottom), we insist that the stack avoids subsequences that are order-isomorphic to $\sigma$. Soon after, Berlow~\cite{BERLOW2021112571} extended $s_{\sigma}$ to stacks that must avoid all of the patterns in a given set. In the same year, Defant and Zheng~\cite{DEFANT2021102192} extended $s_{\sigma}$ to vincular patterns. 

West's stack-sorting map $s$ derives its name from the easily verifiable result that for any $x \in S_n$, the permutation $s^{n-1}(x)$ is \emph{sorted} (i.e., is the identity permutation). Naturally, one of the most classical questions on the stack-sorting map concerns the characterization of \emph{$1$-stack-sortable} permutations---the permutations $x \in S_n$ for which $s(x)$ is sorted. Knuth~\cite{knuth1997art} was the first to answer this question; he showed that $s(x)$ is sorted if and only if $x$ avoids subsequences that are order-isomorphic to $231$ and enumerated the number of such permutations in $S_n$ to be $n^{\text{th}}$ Catalan number $\frac{1}{n+1} \binom{2n}{n}$.

With the question on the permutations that get sorted after passing through one stack settled, it is then natural to ask about the permutations that are sorted after passing through multiple stacks in a series. A notable example is \emph{West's $2$-stack-sorting-map} $s \circ s$. In 1990, West~\cite{WEST1990} characterized the $2$-stack-sortable permutations---the permutations that are sorted after passing through West's $2$-stack-sorting-map---using barred patterns and conjectured that the number of $2$-stack-sortable permutations in $S_n$ is $\frac{2}{(n+1)(2n+1)} \binom{3n}{n}$. Two years later in 1992, Zeilberger~\cite{ZEILBERGER19928592} affirmed West's conjecture. Later, Cerbai~\cite{CERBAI2021322341} generalized some of the results on West's $2$-stack-sorting map to Cayley permutations. Only recently in 2020, Defant presented a polynomial time algorithm \cite{DEFANT2021} that counts the number of permutations that are sorted after passing through a series of three stacks.

To better understand the permutations that are sortable under stacks in series, Cerbai, Claesson, and Ferrari~\cite{CERBAI2020105230} introduced the $\sigma$-machine $s \circ s_{\sigma}$ in 2020. It is then natural to ask about the permutations that get sorted after passing through the $\sigma-$machine once. Therefore, let $\Sort_{n}(\sigma)$ denote the set of permutations in $S_n$ that are sorted after a single pass through the $\sigma$-machine. Equivalently, due to Knuth's characterization of $1$-stack sortable permutations, $\Sort_n(\sigma)$ is the set of permutations $x$ where $s_{\sigma}(x)$ avoids $231$. The most notable enumerations of the sequence $|\Sort_n(\sigma)|$ are by Cerbai, Claesson, and Ferrari~\cite{CERBAI2020105230} and by Cerbai, Claesson, Ferrari, and Steingrímsson~\cite{CERBAI2020P3.32} in a subsequent work (see \Cref{sortsigma}). 

\begin{table}
\centering
\def\arraystretch{1.25}
\begin{tabular}{p{15mm}ll}
	\toprule
	$\sigma$ & Reference & OEIS entry for~$|\Sort_{n}(\sigma)|$\\
	\midrule
    123 & Section 5 of~\cite{CERBAI2020105230} & A294790\\
    132 & Corollary 4.17 of~\cite{CERBAI2020P3.32} & A007317\\
    321 & Section 4 of~\cite{CERBAI2020105230} & A011782\\
    \bottomrule
    \end{tabular}
    \caption{Enumeration of $|\Sort_{n}(\sigma)|$, for patterns $\sigma$ of length three.}
    \label{sortsigma} 
\end{table}

In 2021, Baril, Cerbai, Khalil, and Vajnovszki~\cite{BARIL2021106138} introduced the $(\sigma, \tau)$-machine $s \circ s_{\sigma, \tau}$ as a further generalization of the $\sigma$-machine. For example, \Cref{machine} illustrates that $x = 2314$ is sent to $3124$ after passing through the $(132, 321)$-machine. 

\begin{figure}
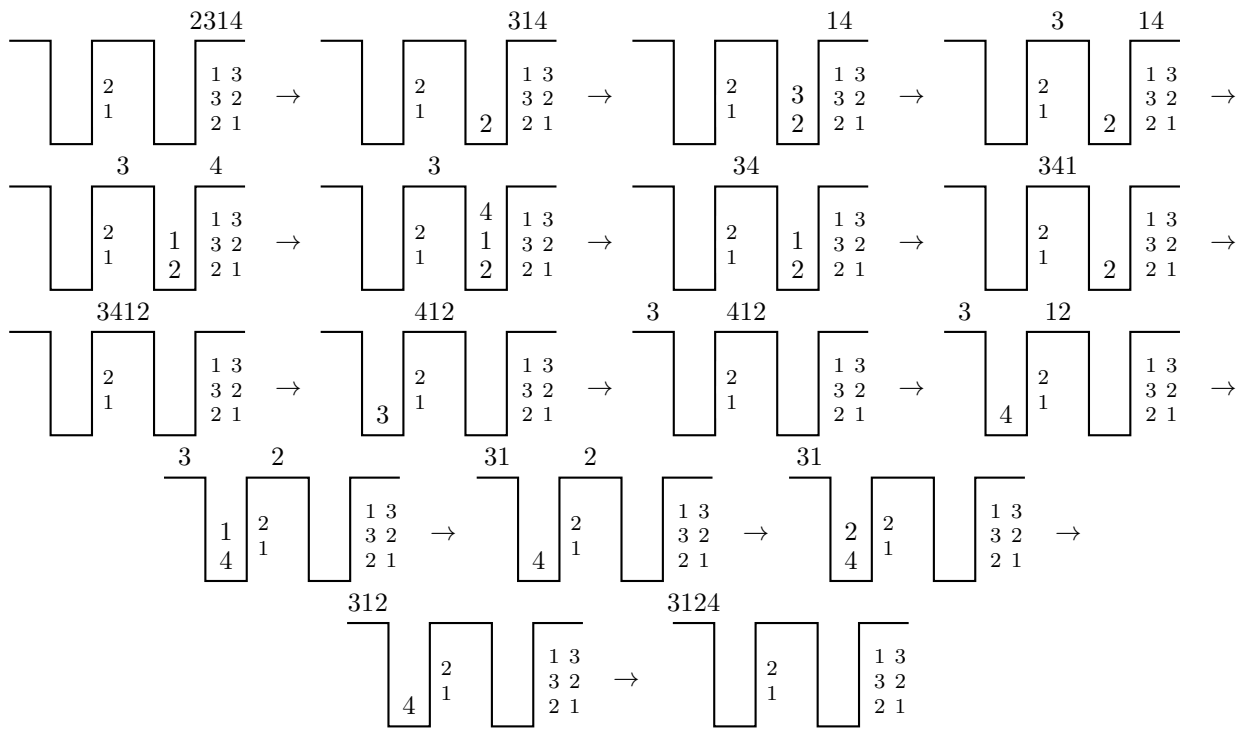

\begin{center}
\footnotesize
\machine{2314}{}{}{}{}{}{}{}{}
\machine{314}{}{}{2}{}{}{}{}{}
\machine{14}{}{3}{2}{}{}{}{}{}
\machine{14}{}{}{2}{3}{}{}{}{}
\machine{4}{}{1}{2}{3}{}{}{}{}
\machine{}{4}{1}{2}{3}{}{}{}{}
\machine{}{}{1}{2}{34}{}{}{}{}
\machine{}{}{}{2}{341}{}{}{}{}
\machine{}{}{}{}{3412}{}{}{}{}
\machine{}{}{}{}{412}{}{}{3}{}
\machine{}{}{}{}{412}{}{}{}{3}
\machine{}{}{}{}{12}{}{}{4}{3}
\machine{}{}{}{}{2}{}{1}{4}{3}
\machine{}{}{}{}{2}{}{}{4}{31}
\machine{}{}{}{}{}{}{2}{4}{31}
\machine{}{}{}{}{}{}{}{4}{312}
\machineend{}{}{}{}{}{}{}{}{3124}
\end{center}
\caption{The $(132, 321)$-machine on $x = 2314$.}
\label{machine}
\end{figure}

As an analog of $\Sort_{n}(\sigma)$, let $\Sort_{n}(\sigma, \tau)$ consist of the permutations in $S_n$ that are sorted after an iteration through the $(\sigma, \tau)$-machine. Permutations in $\Sort_n(\sigma,\tau)$ are said to be \emph{$(\sigma,\tau)$-sortable}. Baril, Cerbai, Khalil, and Vajnovszki~\cite{BARIL2021106138} enumerated $|\Sort_{n}(\sigma, \tau)|$ for several pairs of length $3$ patterns $(\sigma, \tau)$ (see \Cref{iplenum}) and observed through numerical computation that the only remaining pair of length $3$ permutations $(\sigma, \tau)$ for which $|\Sort_{n}(\sigma, \tau)|$ may belong to the OEIS~\cite{oeis} is $(\sigma, \tau) = (132, 321)$. As our first main result, in \Cref{section_132_321} we settle Baril, Cerbai, Khalil, and Vajnovszki's conjecture that $|\Sort_{n}(132, 321)|$ is enumerated by A102407 (see Conjecture 1 of~\cite{BARIL2021106138}).

\begin{table}
\centering
\def\arraystretch{1.25}
\begin{tabular}{p{15mm}llr}
	\toprule
	$(\sigma,\tau)$ &  Reference & Sequence~$|\Sort_{n}(\sigma,\tau)|$ & OEIS\\
	\midrule
	123,213\newline
	132,312\newline
	231,321 & Theorem 4.2 of~\cite{BARIL2021106138} & Catalan numbers & A000108\\
	123,132 & Corollary 5.3 of~\cite{BARIL2021106138} & Catalan numbers & A000108\\
	123,231 & Corollary 3.2 of~\cite{BARIL2021106138} & Large Schr\"oder numbers & A006318\\
	123,312 & Section 6 of~\cite{BARIL2021106138} & Binomial Transform of Catalan numbers & A007317 \\
	132,321 & \Cref{main} & 1, 2, 4, 10, 26, 72, 206, 606,$\dots$ & A102407 \\
	123,321 & \Cref{secondary} & $2^{n-1}$ for $n \leq 3$ and $7 \cdot 2^{n-4}$ for $n \geq 4$ &\\
	\bottomrule
\end{tabular}
\caption{Enumerations of $|\Sort_n(\sigma, \tau)|$.}
\label{iplenum}
\end{table}

\begin{theorem} \label{main}
	The sequence $|\Sort_{n}(132,321)|$ is the OEIS A102407~\cite{oeis}.
\end{theorem}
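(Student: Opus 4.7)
The plan is to give a structural characterization of $\Sort_n(132,321)$ and then extract from it the recurrence (or generating function) matching OEIS A102407.

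\textbf{Step 1 (stack structure).} First I would exploit that, at every instant during a pass of $s_{132,321}$, the content of the stack read from top to bottom must avoid both $132$ and $321$ as patterns. Such sequences are highly restricted: on $k$ elements there are only $1+\binom{k}{2}$ shapes, each consisting of a block of consecutive values sitting on top of an increasing sequence of the remaining values. This gives a compact, finite-state description of the stack at each time step, parametrized by a few key entries, and determines deterministically whether the next input symbol can be pushed or forces a pop.

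\textbf{Step 2 (characterization of sortables).} Next I would run a case analysis, conditioned on which of the $1+\binom{k}{2}$ shapes the stack currently has and on how the incoming entry compares with the relevant key entries (top of stack, top of the consecutive block, bottom of the increasing part). Reading off the order in which values are popped yields an explicit description of the output $s_{132,321}(x)$. Combining this with Knuth's theorem (so that $x\in\Sort_n(132,321)$ iff $s_{132,321}(x)$ avoids $231$), the goal is to translate $(132,321)$-sortability into a clean avoidance condition on $x$ itself, presumably a finite list of classical, barred, or vincular patterns.

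\textbf{Step 3 (enumeration).} With such a characterization in hand, I would decompose a $(132,321)$-sortable permutation in a canonical way---for example, by the position of the largest entry $n$, or by the first time the first stack empties during the pass---to obtain a recurrence for $a_n:=|\Sort_n(132,321)|$ or, equivalently, a functional equation for $A(z)=\sum_{n\ge 0}a_n z^n$. Solving it and matching to the generating function (or recurrence) listed for A102407 completes the argument.

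\textbf{Main obstacle.} The hardest part is Step 2. Unlike in the classical single-pattern setting, the stack can reject a push for either of two reasons ($132$ or $321$), so multiple configurations of the stack must be tracked simultaneously, and the analysis of when a $231$-pattern can appear in the output involves several interacting sub-cases. Extracting a sharp, verifiable characterization of sortability, rather than drowning in nested cases, is the technical core of the proof; once that is in place, the enumeration in Step 3 should follow by standard generating-function techniques.
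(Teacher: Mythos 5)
Your outline points in the right general direction---the paper does exactly what you predict in spirit: it characterizes $\Sort_n(132,321)$ as $\Av_n(123,132^{\star})$, where $132^{\star}$ is a bivincular pattern, and then enumerates that class. But as written the proposal has two genuine gaps. First, in Steps 1--2 you propose tracking all $1+\binom{k}{2}$ admissible stack shapes and case-splitting on them; the paper avoids this entirely by first proving that every $(132,321)$-sortable permutation must avoid $123$ (via a lemma that left-to-right minima, once in the stack, force the stack to be increasing and hence stay until the end). Once $123$-avoidance is established, the $321$ condition on the stack becomes vacuous and the machine behaves as a pure $132$-stack, and the whole analysis reduces to comparing left-to-right minima with non-minima. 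Without that observation your case analysis is exactly the ``drowning in nested cases'' you fear, and you have not indicated how to escape it.

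Second, and more seriously, Step 3 as sketched would fail: the class $\Av_n(123,132^{\star})$ does \emph{not} decompose multiplicatively at the position of $n$ (for a $123$-avoiding permutation the entries flanking $n$ are not order-separated), so the standard ``condition on where $n$ sits'' recurrence does not close up. The paper's resolution is an extra bijective idea you do not supply: it proves, by an analysis of West's signatures, that West's bijection carries $\Av_n(123,132^{\star})$ onto $\Av_n(132,123^{\star})$, and only in the latter ($132$-avoiding) class does the position-of-$n$ decomposition yield the system $f_n=g_{n-1}-g_{n-2}$, $\sum_i g_ig_{n-1-i}=g_n+f_n$ and hence the generating function $\bigl((1+z-z^2)-\sqrt{1-2z-5z^2-2z^3+z^4}\bigr)/(2z)$ of A102407. (The paper also sketches an alternative: a grid/ltr-min decomposition mapping $\Av_n(123,132^{\star})$ to Dyck paths avoiding the factor $dudu$.) Either way, some transfer of this kind is indispensable, and it is the main content missing from your plan.
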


To prove \Cref{main}, we first characterize $\Sort_n(132,321)$ by avoidance of two (generalized) patterns of length three. We then use a bijection first discovered by West to map $\Sort_n(132,321)$ to a set that we are able to enumerate directly. A more geometrical approach that involves a bijection with Dyck paths avoiding the subword $dudu$ is also illustrated in \Cref{Dyck}.

Furthermore, in \Cref{section_123_321}, we enumerate $\Sort_n(123, 321)$. This sequence does not belong to the OEIS~\cite{oeis} but has a simple closed form for $n \geq 4$. 

\begin{theorem} \label{secondary}
   The sequence $|\Sort_{n}(123, 321)|$ is the sequence $7 \cdot 2^{n-4}$ for $n \geq 4.$
\end{theorem}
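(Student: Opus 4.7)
The plan is to mirror the strategy used for Theorem~\ref{main}: first derive a pattern-avoidance characterization of the $(123,321)$-sortable permutations, then enumerate the resulting class. A fundamental simplification, not available for most $(\sigma,\tau)$-machines, is that a stack which forbids both $123$ and $321$ can never hold more than $4$ elements at once, by the Erd\H{o}s--Szekeres theorem applied to the top-to-bottom sequence of its contents. Up to relative order the only legal stack configurations of sizes~$3$ and~$4$ are $\{132,213,231,312\}$ and $\{2143,2413,3142,3412\}$ respectively, so the push/pop dynamics of $s_{123,321}$ are governed by a small finite-state machine.

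First I would run through these finitely many stack configurations and write down, for each current stack state and each comparison of the incoming entry against the stack contents, whether a push is legal or whether a pop is forced. From this rule table I would then identify exactly when the output $s_{123,321}(x)$ contains a $231$ pattern and translate this into a list of forbidden (possibly generalized) subpatterns of~$x$, in the spirit of the characterization step for Theorem~\ref{main}.

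With such a characterization in hand, the closed form $7 \cdot 2^{n-4}$ strongly suggests the following recursive structure. For each $n \geq 5$, removing the entry $n$ from an element of $\Sort_n(123,321)$ should produce an element of $\Sort_{n-1}(123,321)$, and conversely exactly two insertion positions of $n$ should respect the forbidden patterns. I would establish this by direct case analysis on where $n$ lies relative to the characterizing patterns, and handle the base case $n=4$ by verifying $|\Sort_4(123,321)|=7$ on the $24$ elements of $S_4$ (either by hand or by running the machine).

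The main obstacle, as with Theorem~\ref{main}, lies in the characterization step: although the set of stack states is small, one must track how a blocked push cascades through pops and keep careful account of which values leave the stack in which order, so that no long-range dependency in $x$ is missed when translating $231$-patterns in the output back to forbidden patterns in the input. Once the avoidance conditions are pinned down, both the doubling rule and the constant~$7$ should fall out of a short case analysis on the position of the largest entry.
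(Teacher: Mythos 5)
Your opening observation is correct and worth keeping: a stack avoiding both $123$ and $321$ (read top to bottom) holds at most four entries by Erd\H{o}s--Szekeres, so the machine really is a small finite-state device. The paper exploits a weaker version of this implicitly (at most two entries can sit below a left-to-right minimum). However, the two load-bearing steps of your plan are, respectively, unexecuted and false. The characterization step is only a promise --- you never produce the forbidden patterns, and in fact the paper never obtains a clean pattern-avoidance characterization here; it works instead with structural constraints ($x\in\Av_n(123)$, $x_1\in\{n-1,n\}$, $x_n\in\{1,2\}$, $n$ preceding $1$ and $2$, and $2\cdot 1$ adjacent in the output).

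The fatal problem is the proposed recursion on the largest entry. It is not true that each element of $\Sort_{n-1}(123,321)$ admits exactly two insertion positions for $n$. Concretely, $3241\in\Sort_4(123,321)$ (the machine sends it to $4213$, which avoids $231$), yet none of the five insertions of $5$ into $3241$ is $(123,321)$-sortable: four of them begin with $3\notin\{4,5\}$, violating the requirement $x_1\in\{n-1,n\}$ (Lemma~\ref{begin}), and the remaining one, $53241$, is sent by the machine to $34215$, which contains the $231$-occurrence $3\cdot 4\cdot 2$. So the deletion/insertion map at the value $n$ is not $2$-to-$1$, and the doubling cannot be extracted from the position of the maximum. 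The factor of $2$ actually lives at the bottom of the permutation: every sortable $x$ (for $n\ge 5$) ends in $1$ or $2$, the involution exchanging the values $1$ and $2$ pairs these two halves without changing the output (Corollary~\ref{swap}), and the map $x\mapsto\mathrm{inc}_{n+1}(x)$, which increments all entries and appends a new minimum, bijects $\Sort_{n-1}(123,321)$ with the half ending in $1$ (Proposition~\ref{extend}). You would need to redirect your case analysis from the largest entry to the two smallest entries for the argument to go through.
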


We end the paper with some suggestions for future work.

\section{Preliminaries}\label{prelim}


Let $[n]=\{1,2,\dots,n\}$. A \emph{permutation} of length $n$ is a bijective map $x:[n]\to[n]$. Let $S_n$ be the set of permutations of length $n$. We denote a permutation $x\in S_n$ by the string $x=x_1 x_2 \dots x_n$, where $x_i=x(i)$. Let $\indx_x(i)=x^{-1}(i)$ be the index such that $x_{\indx_{x}(i)}=i$. For example, $\indx_{12534}(5)=3$. If $i<j$ (respectively, $i>j$), we say that $x_i$ \emph{appears to the left of (resp. to the right of)} $x_j$ in $x$. An entry $x_i$ is a \emph{left-to-right minimum} (briefly, ltr minimum) of $x$ if $x_i$ is smaller than each entry that appears to its left. Furthermore, we let $x_{[i, j]}$ denote the subsequence $x_{i}x_{i+1}\dots x_{j}$, and we let $\{x_{[i, j]}\}$ be the set $\{x_{i},x_{i+1},\dots, x_{j}\}$. For $k\ge 1$, we also let $x+k=(x_1+k)(x_2+k)\dots (x_n+k)$ denote the string obtained by adding $k$ to each entry of $x$. As an example to illustrate the above definitions, let $x=524613$. The ltr-minima of $x$ are $5$, $2$, and $1$. For $i=2$ and $j=5$, we have $x_{[i,j]}=2461$ and $\{x_{[i,j]}\}=\{2,4,6,1\}$. Finally, we have $x+3=857946$.

A short introduction to permutation patterns can be found in Bevan's note~\cite{BEVAN2015}. Let us recall some notions that will be used in this paper.
Given two permutations $x\in S_n$ and $y\in S_m$, with $m\le n$, we say that $x$ \emph{contains} the pattern $y$ if there exists some $c(1) < c(2) < \dots < c(m)$ such that the subsequence $z = x_{c(1)}x_{c(2)} \dots x_{c(m)}$ is order-isomorphic to $y$. If so, we say that $z$ is an \emph{occurrence} of $y$ in $x$. If $x$ does not contain the pattern $y$, then $x$ \emph{avoids} $y$. Let $\Av_n(y)$ denote the set of permutations of length $n$ that avoid $y$. Similarly, for a set of patterns $T$ we let $\Av_{n}(T)$ be the set of permutations in $S_n$ avoiding all the patterns in $T$. Bivincular patterns~\cite{BMCDK2010} are obtained by allowing constraints of adjacency on positions and values. Mesh patterns generalize both classical and bivincular patterns in the following sense. A \emph{mesh pattern}~\cite{BC2011} is a pair $(y,R)$, where $y\in S_m$ is a permutation and $R \subseteq \left[ 0,m \right] \times \left[ 0,m \right]$ is a set of pairs of integers. Each pair in $R$ determines the lower left corners of unit squares which specify forbidden regions in the plot of permutations. An occurrence of the mesh pattern $(y,R)$ in the permutation $x$ is an occurrence of the classical pattern $y$ such that no other points of $x$ occur in the forbidden regions specified by $R$. For the rest of this paper, we shall denote by $123^{\star}$ and $132^{\star}$ the bivincular patterns depicted--as mesh patterns--in \Cref{mesh_patterns}. For example, an occurrence of $123^{\star}$ in a permutation $x$ is a classical occurrence $x_{c(1)}x_{c(2)}x_{c(3)}$ of $123$ such that $c(3)=c(2)+1$ and $x_{c(3)}=x_{c(2)}+1$.

\begin{figure}
$$
123^{\star} \;=\;
\begin{tikzpicture}[scale=0.40, baseline=20.5pt]
\fill[NE-lines] (2,0) rectangle (3,4);
\fill[NE-lines] (0,2) rectangle (4,3);
\draw [semithick] (0.001,0.001) grid (3.999,3.999);
\filldraw (1,1) circle (6pt);
\filldraw (2,2) circle (6pt);
\filldraw (3,3) circle (6pt);
\end{tikzpicture}
\qquad
132^{\star} \;=\;
\begin{tikzpicture}[scale=0.40, baseline=20.5pt]
\fill[NE-lines] (2,0) rectangle (3,4);
\fill[NE-lines] (0,2) rectangle (4,3);
\draw [semithick] (0.001,0.001) grid (3.999,3.999);
\filldraw (1,1) circle (6pt);
\filldraw (2,3) circle (6pt);
\filldraw (3,2) circle (6pt);
\end{tikzpicture}
$$
\caption{Bivincular patterns $123^{\star}$ and $132^{\star}$.}\label{mesh_patterns}
\end{figure}
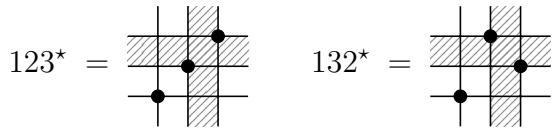

Lastly, we cite Knuth's~\cite{knuth1997art} lemma on the characterization of 1-stack-sortable permutations. 

\begin{lemma}[Knuth~\cite{knuth1997art}]\label{knuth}
 A permutation $x$ is 1-stack-sortable if and only if $x$ avoids $231$. 
\end{lemma}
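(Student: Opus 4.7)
The plan is to prove both directions of the biconditional, handling the ``only if'' implication (if $x$ is $1$-stack-sortable then $x$ avoids $231$) by contrapositive, and the ``if'' implication by induction on the length of $x$.

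For the contrapositive, I would assume $x$ contains a $231$ occurrence at positions $i<j<k$ with values $x_k<x_i<x_j$, and show $s(x)\neq 12\cdots n$. The key is to track where $x_i$ sits when the scan reaches $x_j$. Since $i<j$, $x_i$ has already been read: either $x_i$ has been popped already, in which case it lies in the output, or $x_i$ is still in the stack. In the latter case, because the stack is increasing from top to bottom and $x_i<x_j$, every entry at or above $x_i$ in the stack is smaller than $x_j$ and so must be popped before $x_j$ can be pushed. In either case $x_i$ enters the output strictly before $x_j$ is pushed, hence strictly before $x_k$ is even read, let alone output. Consequently $x_i$ precedes $x_k$ in $s(x)$ even though $x_i>x_k$, so $s(x)$ is not the identity.

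For the reverse direction, I would induct on $n=|x|$, the cases $n\le 1$ being trivial. Let $x\in \Av_n(231)$ and write $x=L\,n\,R$, where $n$ is the maximum entry and $L,R$ are (possibly empty) subwords. If some entry of $L$ exceeded some entry of $R$, then together with $n$ it would form a $231$ occurrence, contradicting $x\in\Av_n(231)$. Hence every value in $L$ is below every value in $R$, so $L$ is a $231$-avoiding permutation of $\{1,\dots,k\}$ and $R$ is a $231$-avoiding permutation of $\{k+1,\dots,n-1\}$, for $k=|L|$. By the inductive hypothesis, $s(L)=12\cdots k$ and $s(R)=(k{+}1)(k{+}2)\cdots(n{-}1)$. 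I would then trace the stack's action on $L\,n\,R$: processing the prefix $L$ is identical to processing $L$ standalone (the greedy rule depends only on the current top and the next input symbol), and when $n$ is read it exceeds everything in the stack, forcing a complete flush before $n$ is pushed. This flush outputs the remaining entries of $s(L)$, so the cumulative output at that point is exactly $12\cdots k$. Now $n$ sits alone in the stack; since every entry of $R$ is smaller than $n$, processing $R$ never displaces $n$ and behaves exactly like processing $R$ on an empty stack, producing output $(k{+}1)\cdots(n{-}1)$ by induction. Finally $n$ is popped, yielding $s(x)=12\cdots n$.

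The main point requiring care is the structural observation that the stack's evolution during the $L$-phase of $L\,n\,R$ coincides with that of $L$ run standalone, and that the arrival of $n$ precisely plays the role of the final flush in the standalone computation of $s(L)$, because $n$ exceeds all current stack contents. Similarly, while $R$ is being processed, the permanent presence of $n$ at the bottom is invisible to the greedy rule since the rule only inspects the current top. Once these two observations are stated cleanly, the induction closes without further effort.
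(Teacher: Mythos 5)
Your proof is correct. The paper does not actually supply its own argument for this lemma---it is quoted from Knuth as a known result---so there is nothing to diverge from; your two-directional argument (contrapositive via tracking when $x_i$ must exit the stack relative to $x_j$ and $x_k$, and induction via the decomposition $x=L\,n\,R$ with the observation that $s(L\,n\,R)=s(L)\,s(R)\,n$) is precisely the standard proof found in Knuth and West, and both the structural claims you flag as needing care (the $L$-phase coinciding with the standalone run of $L$, and $n$ being invisible at the bottom of the stack during the $R$-phase) are handled correctly.
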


\subsection{West's Bijection}\label{west_bij}

In 1995, West~\cite{WEST1995247} presented a new bijection between $\Av_n(132)$ and $\Av_n(123)$. Let $x = x_1 x_2 \dots x_{n-1} \in S_{n-1}$. For $1 \leq i \leq n$, denote by $x^{i}$ the permutation $x_{1} x_{2} \dots x_{i-1} n x_{i} \dots x_{n-1}$ obtained by inserting $n$ in the $i$th \emph{site} of $x$ (i.e. the position immediately before $x_i$, for $i=1,\dots,n-1$, or at the end, if $i=n$). Given a pattern $y$, define the $i$th site of $x$ to be \emph{active} with respect to $y$ if $x^i$ avoids $y$. For example, if $x = 45231$, then $i \in \{1,3,5,6\}$ are active with respect to $y =132$, but $i \in \{2,4\}$ are not since both $x^2=465231$ and $x^4=452631$ contain $132$. Now, let $\smallx_k(x)$ denote the subsequence of the smallest $k$ numbers in $x$. Given a pattern $y$, let $\Act_{j}(x; y)$ denote the set of active sites in $\smallx_{|x|+1-j}(x)$ with respect to the pattern $y$. Note that $\Act_1(x; y)$ is equal to the number of active sites of $x$ with respect to $y$. For example, $|\Act_{1}(45231;132)|  = |\Act_{2}(45231; 132)|= 4$, because the set of active sites with respect to $132$ pattern for $x=45231$ is $\{1,3,5,6\}$ and for $\smallx_{4}(x) = 4231$ is $\{1,2,4,5\}$. The \emph{signature} of $x$ with respect to $y$ is the word
$$
\mathfrak{S}(x; y)=|\Act_{1}(x; y)| \cdot |\Act_{2}(x; y)|  \dots |\Act_{n-1}(x; y)|,
$$
where $\cdot$ denotes concatenation. For example, $\mathfrak{S}(45231; 132) = 44332$. In addition, for a set of permutations $T$, let $\mathfrak{S}(T; y) := \{\mathfrak{S}(x; y) \mid x \in T\}$. 
West\cite{WEST1995247} showed that permutations in $\Av(123)$, as well as permutations in $\Av(132)$, are uniquely determined by their signature. Furthermore, he showed that $\mathfrak{S}(\Av_{n}(132); 132) = \mathfrak{S}(\Av_{n}(123); 123)$. This naturally induces a bijection between $\Av_{n}(132)$ and $\Av_{n}(123)$. For example, $x = 45231$ is the only $x \in \Av_5(132)$ such that $\mathfrak{S}(x; 132) = 44332,$ and $y = 42153$ is the only $y \in \Av_5(123)$ such that $\mathfrak{S}(y; 123) = 44332$.

\section{The $(132,321)$-machine}\label{section_132_321}

This section is devoted to the $(132,321)$-machine. First we prove that $\Sort_n(132, 321)=\Av_n(123,132^{\star})$. Then we show that West's bijection maps $\Av_n(123,132^{\star})$ to $\Av_n(132,123^{\star})$. Finally, we enumerate $\Sort_n(132,321)$ by providing a generating function for $\Av_n(132,123^{\star})$. We end the section by sketching a bijection between $\Sort_n(132,321)$ and the set of Dyck paths of semilength $n$ that avoid $dudu$ as a factor.

Let us begin by establishing a property that all ltr minima of the input permutation must satisfy.

\begin{lemma} \label{ltr}
   If $x_i$ is a ltr minimum of $x$ and $\indx_{s_{132,321}(x)}(x_i)<\indx_{s_{132,321}(x)}(x_j)$, then $j < i$.
\end{lemma}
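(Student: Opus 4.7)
The plan is to prove the stronger statement that, once $x_i$ enters the stack, it remains there until every input entry $x_j$ with $j>i$ has been output; the lemma then follows immediately by contrapositive.

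The first step is to describe the stack at the precise moment $x_i$ is pushed. Since $x_i$ is a left-to-right minimum of $x$, every entry previously placed in the stack came from $x_1,\dots,x_{i-1}$ and is therefore strictly larger than $x_i$. Writing the stack top-to-bottom as $x_i,z_1,z_2,\dots,z_m$, I would use the fact that this configuration must avoid $132$: if $l<l'$ with $z_l>z_{l'}$, then the triple $x_i,z_l,z_{l'}$ (with $x_i$ smaller than both $z_l$ and $z_{l'}$) would be an occurrence of $132$. Hence $z_1<z_2<\cdots<z_m$. Moreover, nothing below $x_i$ can change until $x_i$ itself is popped, so this increasing structure of the $z_l$ persists as long as $x_i$ remains in the stack.

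Next, I would show that no subsequent input entry can force $x_i$ out. The only moments at which $x_i$ could be popped are those when $x_i$ is on top; at such a moment the stack is still exactly $x_i,z_1,\dots,z_m$. Let $x_k$ with $k>i$ be the current input; I would verify that $x_k$ is always pushable by a short case check on the new length-three patterns formed with $x_k$ (pre-existing triples are already safe since the old stack is valid). Triples of the form $x_k,x_i,z_l$ can be neither $132$ nor $321$ because $x_i<z_l$ rules out both shapes; triples $x_k,z_l,z_{l'}$ with $l<l'$ can be neither because $z_l<z_{l'}$ also rules out both shapes. Hence $x_i$ never leaves the stack while input remains.

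The argument concludes with the flushing phase. Once the input is exhausted, the stack is popped top-to-bottom, and every entry above $x_i$ at that point was pushed after $x_i$ and therefore lies in $\{x_{i+1},\dots,x_n\}$. Combined with the entries from $\{x_{i+1},\dots,x_n\}$ that were popped earlier during processing, this accounts for every $x_j$ with $j>i$ and places each of them before $x_i$ in $s_{132,321}(x)$, yielding the lemma. The only genuinely non-routine step is recognizing the forced increasing order of the $z_l$; once that structural observation is in place, the case analysis in the second paragraph is very short, so I do not anticipate a serious obstacle.
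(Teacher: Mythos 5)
Your proposal is correct and follows essentially the same route as the paper: both arguments observe that $132$-avoidance forces the stack below the ltr minimum $x_i$ to be increasing from top to bottom, then note that any $132$ or $321$ pattern requires its second and third entries to be in decreasing order, so no later input can ever force $x_i$ out before the flush. Your write-up is just a more detailed unpacking of the paper's two-line case check.
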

\begin{proof}
Because $x_i$ is a ltr minimum and the stack must avoid the $132$ pattern, the stack must be increasing from top to bottom right after $x_i$ enters the stack. Now, because $z_{2} > z_{3}$ in any $132$ or $321$ pattern $z$ and the numbers below $x_i$ in the stack are increasing, $x_i$ can only leave the stack after every number has entered or left the stack. Therefore, if $x_j$ appears to the right of $x_i$ in $s_{132,321}(x)$, then $x_j$ must have been in the stack when $x_i$ entered the stack. Thus, $j < i$.
\end{proof}

Next, we show that any permutation in $\Sort_{n}(132, 321)$ must avoid the $123$ pattern.

\begin{proposition} \label{avoid_123}
    If $x \in \Sort_{n}(132, 321)$, then $x \in \Av_{n}(123)$. 
\end{proposition}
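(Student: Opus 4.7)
The plan is to establish the contrapositive: if $x$ contains a $123$ pattern, then $s_{132,321}(x)$ contains a $231$ pattern, which by Knuth's theorem (\Cref{knuth}) forces $x \notin \Sort_n(132,321)$. The strategy is to pick a specific $123$-occurrence in $x$ and track its three entries through the $(132,321)$-stack.

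First I would choose a $123$-occurrence $x_a x_b x_c$ in $x$ whose first element $x_a$ is a left-to-right minimum of $x$; this is always possible, because among all $123$-occurrences, one that minimizes the value $x_a$ must have $x_a$ a ltr minimum (otherwise some smaller entry to the left of $x_a$ could be swapped in for $x_a$, giving a $123$-occurrence with strictly smaller first value and contradicting minimality). With $x_a$ a ltr minimum and $b, c > a$, \Cref{ltr} immediately places both $x_b$ and $x_c$ to the left of $x_a$ in $s_{132,321}(x)$, so it remains to show that $x_b$ precedes $x_c$ in the output---for then $x_b x_c x_a$ is a $231$-occurrence, since $x_a < x_b < x_c$.

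The main step is this relative positioning of $x_b$ and $x_c$, which I would verify by examining the state of the stack just before $x_c$ is pushed. Applying \Cref{ltr} to the pair $(x_a, x_c)$ shows that $x_a$ exits the stack strictly after $x_c$ does, so $x_a$ is still in the stack at the moment $x_c$ is about to be pushed. If $x_b$ were also in the stack at that moment, then since pushes occur in input order and pops come off the top, $x_b$ would sit above $x_a$ in the stack; pushing $x_c$ on top would then create the top-to-bottom subsequence $x_c, x_b, x_a$ with $x_c > x_b > x_a$---a forbidden $321$ pattern in the stack. Hence $x_b$ must already have been popped before $x_c$ is pushed, and therefore appears before $x_c$ in the output. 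The main obstacle is really just this single stack snapshot: once one sees that $x_a$ is pinned in the stack at $x_c$'s push moment, the $321$-avoidance rule of the stack immediately forces $x_b$ to be absent, and the $231$-occurrence $x_b x_c x_a$ drops out.
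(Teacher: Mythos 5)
Your proposal is correct and follows essentially the same route as the paper: pick a $123$-occurrence whose first entry is a ltr minimum, use \Cref{ltr} to pin that entry in the stack until the end, and observe that the stack's $321$-avoidance forces the middle entry out before the third one enters, producing a $231$ in the output. The only cosmetic difference is how the special occurrence is selected (minimal first value versus the paper's lexicographically smallest occurrence), which yields the same ltr-minimum property.
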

\begin{proof}
    Suppose otherwise. Let $x_{i}x_{j}x_{k}$ be the lexicographically smallest $123$ pattern of $x$ so that $x_i$ is a ltr minimum. By \Cref{ltr}, $x_i$ must be in the stack when $x_{k}$ enters the stack. Now, $x_{j}$ must leave the stack before $x_{k}$ enters the stack, because $x_{k}x_{j} x_{i}$ is a $321$ pattern. As a result, $x_{j}$ must appear to left of $x_{k}$ in $s_{132,321}(x)$. Furthermore, by \Cref{ltr}, $x_{i}$ must appear to the right of $x_{j}$ and $x_{k}$ in $s_{132,321}(x)$. Thus, $x_{j} x_{k} x_{i}$ is a $231$ pattern in $s_{132,321}(x)$---a contradiction to $x \in \Sort_{n}(132, 321)$ by \Cref{knuth}. 
\end{proof}

Due to \Cref{avoid_123}, the behavior of the $(132,321)$-stack on any $(132,321)$-sortable permutation is equivalent to the behavior of a $132$-stack. Therefore,
$$
\Sort_n(132,321)=\Sort_n(132)\cap\Av(123).
$$
The next proposition shows that any permutation in $\Sort_{n}(132, 321)$ must avoid the $132^{\star}$ pattern. 

\begin{proposition} \label{avoid_132}
    If $x \in \Sort_n(132, 321)$, then $x \in \Av_{n}(132^{\star})$. 
\end{proposition}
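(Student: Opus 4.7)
The plan is to argue by contradiction in the same spirit as the proof of \Cref{avoid_123}: from a putative occurrence of $132^{\star}$ in $x$, I will extract a $231$ pattern in $s_{132,321}(x)$, contradicting \Cref{knuth}. Writing the occurrence as $x_{c(1)} x_{c(2)} x_{c(3)}$ with $c(3)=c(2)+1$ and $x_{c(2)}=x_{c(3)}+1$, the $231$ I aim to exhibit in the output is precisely $x_{c(3)} x_{c(2)} x_{c(1)}$.

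First I would reduce to the case where $x_{c(1)}$ is a ltr minimum: if some $x_{k}$ with $k<c(1)$ satisfies $x_{k}<x_{c(1)}$, then $x_{k} x_{c(2)} x_{c(3)}$ is still a $132^{\star}$ occurrence, so after finitely many replacements one may assume the leftmost entry is itself a ltr minimum. By \Cref{ltr}, both $x_{c(2)}$ and $x_{c(3)}$ then appear strictly to the left of $x_{c(1)}$ in $s_{132,321}(x)$, settling the position of the smallest letter of the desired $231$.

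The crux is to show that $x_{c(3)}$ precedes $x_{c(2)}$ in the output. Because $c(3)=c(2)+1$, the entry $x_{c(2)}$ sits on top of the stack at the moment $x_{c(3)}$ is read, so it suffices to check that pushing $x_{c(3)}$ on top of $x_{c(2)}$ creates no forbidden configuration; both would then coexist in the stack with $x_{c(3)}$ above $x_{c(2)}$, and $x_{c(3)}$ would be popped first. Any newly created $321$ would force $x_{c(3)}$ to the top with two strictly smaller entries $s>t$ below it, in which case $x_{c(2)},s,t$ would already have been a $321$ in the previous stack. Any newly created $132$ would place $x_{c(3)}$ at the top with $s>t$ below satisfying $x_{c(3)}<t<s$, and the consecutivity $x_{c(2)}=x_{c(3)}+1$ then forces $t,s>x_{c(2)}$, making $x_{c(2)},s,t$ a $132$ already present in the previous stack---in either case a contradiction with the stack's avoidance of the forbidden patterns.

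The main obstacle is precisely this stack analysis, which critically uses both bivincular adjacency conditions. Once it is established the conclusion is immediate: $s_{132,321}(x)$ contains $x_{c(3)}, x_{c(2)}, x_{c(1)}$ in this left-to-right order with values $x_{c(1)} < x_{c(3)} < x_{c(2)}$, forming a $231$ pattern and violating \Cref{knuth}.
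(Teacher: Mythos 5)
Your proposal is correct and follows essentially the same route as the paper: reduce to the case where the ``1'' of the $132^{\star}$ occurrence is a ltr minimum, use \Cref{ltr} to place it to the right in the output, and use the positional and value adjacency of the other two entries to show that pushing the smaller one cannot create a forbidden stack pattern, yielding a $231$ in $s_{132,321}(x)$. Your replacement argument (swapping $x_{c(3)}$ for $x_{c(2)}$ in any putative new occurrence) is just a slightly more explicit version of the paper's observation that both adjacent entries would have to participate in the new pattern.
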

\begin{proof}
    Suppose otherwise. Let $x_{i}x_{j}x_{j+1}$ be the lexicographically smallest $132^{\star}$ pattern of $x$ so that $x_i$ is a ltr minimum. Now, suppose that $x_j$ has just entered the stack. Because $x_{j} = x_{j+1} +1$, if $x_{j+1}$ entering the stack introduces a $132$ or $321$ pattern to the stack, then $x_{j+1}$ and $x_{j}$ must both be a part of the pattern. However, in a $132$ pattern $z$, it must be that $z_{2} - z_{1} \geq 2$ and in a $321$ pattern $z$, it must be that $z_{2} < z_{1}$. Yet, $x_{j} - x_{j+1} = 1$, and so the entrance of $x_{j+1}$ into the stack cannot introduce a $132$ or $321$ pattern. Therefore, $x_{j+1}$ must be to the left of $x_{j}$ in $s_{132,321}(x)$. In addition, by \Cref{ltr}, $x_{i}$ must be to the right of $x_{j}$ and $x_{j+1}$ in $s_{132,321}(x)$. Thus, $x_{j+1} x_{j} x_{i}$ is a $231$ pattern in $s_{132,321}(x)$---a contradiction to $x \in \Sort_{n}(132, 321)$ by \Cref{knuth}. 
\end{proof}

We shall prove that the conditions of Lemmas \ref{avoid_123} and \ref{avoid_132} are sufficient for a permutation to be in $\Sort_{n}(132, 321)$. First, a simple lemma.

\begin{lemma} \label{ltr2}
    If $x_i$ is not a ltr minimum of $x \in \Av_{n}(123)$ and $i < j$, then $x_i > x_j$. 
\end{lemma}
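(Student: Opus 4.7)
The plan is to argue by contradiction, leveraging the definition of a left-to-right minimum to produce a witness that, together with $x_i$ and $x_j$, forms a forbidden $123$ pattern.

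Concretely, suppose toward contradiction that $x_i < x_j$ for some $i < j$. Since $x_i$ is not a left-to-right minimum of $x$, by definition there exists an index $k < i$ such that $x_k < x_i$. I would then immediately point out that the subsequence $x_k x_i x_j$ lies at positions $k < i < j$ and satisfies $x_k < x_i < x_j$, hence is an occurrence of $123$ in $x$. This contradicts the assumption $x \in \Av_n(123)$, so we must have $x_i > x_j$.

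There is no real obstacle here: the lemma is a one-line consequence of the definitions once the witness $x_k$ is produced. The only thing to be slightly careful about is that $x_i \neq x_j$ automatically since $x$ is a permutation, so the inequality $x_i > x_j$ (rather than $x_i \geq x_j$) follows from ruling out $x_i < x_j$. The proof will be a couple of sentences long and requires no auxiliary construction beyond invoking the definition of ltr minimum.
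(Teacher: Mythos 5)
Your proof is correct and matches the paper's argument exactly: both proceed by contradiction, use the failure of $x_i$ to be a ltr minimum to produce a witness $x_k < x_i$ with $k < i$, and observe that $x_k x_i x_j$ would be an occurrence of $123$. The remark that $x_i \neq x_j$ because $x$ is a permutation is a fine (if implicit in the paper) finishing touch.
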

\begin{proof}
    Suppose otherwise. Because $x_i$ is not a ltr minimum, there exists $x_k$ such that $x_k < x_i$ and $k < i$. Thus, $x_k x_{i} x_{j}$ is a $123$ pattern in $x$---a contradiction. 
\end{proof}

Next, we prove a property that all $x \in \Av_{n}(123, 132^{\star})$ must satisfy. 

\begin{lemma} \label{not-ltr}
    If $x_i$ is not a ltr minimum of $x \in \Av_{n}(123, 132^{\star})$, then 
    $$\indx_{s_{132,321}(x)}(x_i) < \indx_{s_{132,321}(x)}(x_{i+1}).$$  
\end{lemma}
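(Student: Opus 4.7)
The plan is to prove the lemma jointly with the following stack invariant by induction on~$i$: after the $(132, 321)$-machine has processed $x_1, \dots, x_i$, the stack reads top-to-bottom as $[x_i, M_{j_i}, M_{j_i-1}, \dots, M_1]$, where $M_1 > M_2 > \dots > M_{j_i}$ are the left-to-right minima of $x_1, \dots, x_{i-1}$ listed in order of appearance. The inductive step combines Lemma~\ref{ltr2} with the previous instance of the present lemma to ensure that any non-ltr-min entry $x_{i-1}$ on top of the stack is popped before $x_i$ is pushed, leaving only the ltr-minima spine; pushing $x_i$ onto that spine is automatically legal, since the spine is increasing from top to bottom and no triple formed by $x_i$ together with two of the $M_k$'s is a $132$ or $321$.

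Given the invariant at step~$i$, the lemma reduces to showing that if $x_i$ is not a ltr minimum then attempting to push $x_{i+1}$ onto $[x_i, M_{j_i}, \dots, M_1]$ creates a $132$ or $321$ pattern, forcing $x_i$ to be popped first. By Lemma~\ref{ltr2}, $x_{i+1} < x_i$, and since $x_i$ is not a ltr minimum, $M_{j_i} < x_i$ as well. If $M_{j_i} > x_{i+1}$, then the triple $(x_{i+1}, x_i, M_{j_i})$ in the new stack has values $x_{i+1} < M_{j_i} < x_i$ and is a $132$, which blocks the push.

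The delicate case is $M_{j_i} < x_{i+1}$, where both avoidance hypotheses enter in tandem. First, $\Av(132^{\star})$ forces $x_i - x_{i+1} \geq 2$: otherwise $x_i = x_{i+1} + 1$, and any $x_a < x_i$ with $a < i$ (which exists since $x_i$ is not a ltr minimum) produces a $132^{\star}$ occurrence at $(a, i, i+1)$. Hence the value $v := x_i - 1$ lies in $(x_{i+1}, x_i)$, and I would analyze the position $q$ of $v$ in $x$. The cases $q \in \{i, i+1\}$ are immediately ruled out; for $q > i+1$, the triple of positions consisting of the position of $M_{j_i}$, then $i+1$, then $q$, has values $M_{j_i} < x_{i+1} < v$ and forms a $123$ in $x$, contradicting $\Av(123)$; for $q < i$ with $v$ not a ltr minimum, any earlier $x_b < v$ yields a $123$ triple $(b, q, i)$, again contradicting $\Av(123)$. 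The only remaining possibility is $q < i$ with $v = M_k$ for some $k \leq j_i$, and then $(x_{i+1}, x_i, M_k)$ with values $x_{i+1} < M_k < x_i$ is a $132$ already present in the stack below $x_i$, blocking the push.

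The main obstacle is the sub-case $M_{j_i} < x_{i+1}$: here both forbidden patterns must be used in concert, with $\Av(132^{\star})$ eliminating the degenerate situation $x_i = x_{i+1} + 1$ and $\Av(123)$ constraining the position of $v = x_i - 1$ so tightly that $v$ is forced to be a ltr minimum already sitting in the stack as one of the $M_k$.
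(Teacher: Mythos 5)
Your proof is correct and takes essentially the same route as the paper's: the same case split on whether $x_{i+1}$ is a left-to-right minimum (your comparison of $x_{i+1}$ with $M_{j_i}$), the same use of $132^{\star}$-avoidance to force $x_i - x_{i+1} \ge 2$, and the same conclusion via a blocking $132$ formed with a left-to-right minimum sitting lower in the stack. The only cosmetic differences are your explicit stack-shape invariant, where the paper instead invokes Lemma~\ref{ltr}, and your choice of witness $x_i-1$ where the paper uses $x_{i+1}+1$.
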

\begin{proof}
    First, suppose that $x_{i+1}$ is a ltr minimum. Then because $x_i$ is not a ltr minimum, there must be some ltr minimum $x_j$ such that $x_j < x_i$ and $j < i$. Now, by \Cref{ltr}, $x_j$ must be in the stack when $x_{i+1}$ enters the stack. Then because $x_{i+1}$ is a ltr minimum, the subsequence $x_{i+1} x_{i} x_{j}$ must be a $132$ pattern. Therefore, $x_{i}$ must leave the stack before $x_{i+1}$ enters the stack. 

    Next, suppose that $x_{i+1}$ is not a ltr minimum. First, because $x_i$ is not a ltr minimum, $x_i > x_{i+1}$ by \Cref{ltr2}. In addition, there is some ltr minimum $x_k$ such that $x_k < x_{i+1}$ and $k < i$. 
Furthermore, $x_{i} > x_{i+1} +1$, because otherwise, $x_{k}x_{i}x_{i+1}$ is a $132^{\star}$ pattern. Now, if $x_{i+1}+1$ appears to the right of $x_{i+1}$, then $x_k x_{i+1}\cdot(x_{i+1}+1)$ is a $123$ pattern. Thus, by \Cref{ltr2}, $x_{i+1}+1$ must be a ltr minimum, and by \Cref{ltr}, $x_{i+1}+1$ must be in the stack when $x_{i+1}$ enters the stack. Now, $x_{i}$ and $x_{i+1}+1$ cannot both be in the stack when $x_{i+1}$ enters the stack, because $x_{i+1} x_{i}\cdot(x_{i+1}+1)$ is a $132$ pattern. Thus, $x_{i}$ must leave the stack before $x_{i+1}$ enters the stack.
\end{proof}


\begin{proposition}
 \label{sufficent}
    If $x \in \Av_n(123,132^{\star})$, then $x \in \Sort_n(132, 321).$
\end{proposition}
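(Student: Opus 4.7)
The plan is to show explicitly that
\[
s_{132,321}(x)\;=\;C_2\,C_3\cdots C_{k+1}\,m_k\,m_{k-1}\cdots m_1,
\]
where $m_1>m_2>\cdots>m_k$ are the ltr minima of $x$ at positions $p_1<\cdots<p_k$ and $C_j$ is the (possibly empty) block of non-ltr minima between $m_{j-1}$ and $m_j$, with $C_{k+1}$ denoting those after $m_k$. By \Cref{ltr2} the non-ltr minima of $x$ form a decreasing sequence, so $C_2 C_3\cdots C_{k+1}$ is decreasing; together with the increasing tail $m_k<m_{k-1}<\cdots<m_1$, the output is a ``valley'' permutation, which a quick case split on the block membership of the three entries of a putative $231$ occurrence shows to avoid $231$. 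By \Cref{knuth}, $x\in\Sort_n(132,321)$.

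To obtain the output formula I would maintain throughout the processing of $x$ the stack invariant: reading top to bottom, the stack is always of the form $a,\,m_j,\,m_{j-1},\,\ldots,\,m_1$, where $m_j$ is the most recently pushed ltr minimum (so $m_1,\ldots,m_j$ remain in the stack in the indicated order) and $a$ is an optional non-ltr minimum on top. The invariant is preserved by four push scenarios:
\textbf{(i)} the first entry $a_1$ of a block $C_{l+1}$ pushes onto $[m_l,\ldots,m_1]$ without creating a $132$ or $321$ pattern;
\textbf{(ii)} a subsequent entry $a_i$ of a block $C_l$ tries to push over the current top $a_{i-1}$, creating the $132$ pattern $(a_i,a_{i-1},m_s)$ discussed below, which forces $a_{i-1}$ to be output and then $a_i$ to push admissibly;
\textbf{(iii)} reading a ltr minimum $m_l$ while a non-ltr minimum $a$ occupies the top yields the $132$ triple $(m_l,a,m_{l-1})$ (since $m_l<m_{l-1}<a$), forcing $a$ to be popped;
\textbf{(iv)} $m_l$ then pushes onto $[m_{l-1},\ldots,m_1]$ to produce an increasing stack. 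Given the invariant, each non-ltr minimum of $C_l$ is output either by its successor in $C_l$ (case (ii)) or by the pushing of $m_l$ (case (iii)), whereas the ltr minima, never popped during processing, pop in reverse order once the input is exhausted; this yields the claimed output.

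The main obstacle is case (ii), where the existence of the $132$ pattern in the stack combines both avoidance hypotheses. Since $m_{l-1}<a_i<a_{i-1}$ and the positions of $a_{i-1}$ and $a_i$ are consecutive in $x$, the $132^{\star}$-avoidance of $x$ forces $a_{i-1}-a_i\ge 2$, so $a_i+1$ lies strictly between $a_i$ and $a_{i-1}$. By $123$-avoidance, no value exceeding $a_i$ can appear to the right of $a_i$ in $x$ (else together with $m_{l-1}$ and $a_i$ it would form a $123$ occurrence), so $a_i+1$ appears strictly to the left of $a_{i-1}$. Finally, by \Cref{ltr2} every non-ltr minimum at a position to the left of $a_{i-1}$ is at least the first entry $a_1$ of $C_l$, which is at least $a_{i-1}>a_i+1$; hence $a_i+1$ cannot be a non-ltr minimum and must be a ltr minimum $m_s$ with $s<l$. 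By the invariant $m_s$ is already in the stack, so $(a_i,a_{i-1},m_s)$ is the desired $132$ pattern. The remaining verifications are routine.
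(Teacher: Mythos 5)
Your proof is correct and takes essentially the same route as the paper: you show that the non-ltr minima exit the stack immediately, in decreasing order, while the ltr minima remain until the input is exhausted and then exit in increasing order, so the output is a valley and avoids $231$. The only difference is presentational: your stack invariant and cases (ii)--(iii) inline the content of the paper's Lemmas~\ref{ltr} and~\ref{not-ltr} (including the identical key step that $a_i+1$ must be a ltr minimum already sitting in the stack, forcing the $132$ pop), which the paper proves separately and then assembles.
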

\begin{proof} 
    By \Cref{ltr}, any ltr minimum $x_i$ must remain in the stack until every other number has entered the stack. By \Cref{not-ltr}, any $x_i$ that is not a ltr minimum of $x$ must leave the stack before $x_{i+1}$ enters the stack. Therefore, any ltr minimum of $x$ must appear to the right of any number that is not a ltr minimum of $x$ in $s_{132,321}(x)$. Now, because the ltr minima of $x$ must decreasing from the left to right, by \Cref{ltr}, the ltr minima of $x$ must appear in $s_{132,321}(x)$ in increasing order. Furthermore, by Lemmas \ref{ltr2} and \ref{not-ltr}, the numbers that are not ltr minima of $x$ must appear in $s_{132,321}(x)$ in decreasing order. Thus, $s_{132,321}(x)$ must avoid the $231$ pattern and by \Cref{knuth}, must be in $\Sort_{n}(132,321)$. 
\end{proof}

Now, we collate propositions \ref{avoid_123}, \ref{avoid_132}, and \ref{sufficent}.

\begin{theorem} \label{char}
    For all $n$, it holds that $\Sort_{n}(132, 321) = \Av_{n}(123, 132^{\star})$. 
\end{theorem}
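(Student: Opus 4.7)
The plan is essentially to observe that the theorem is the conjunction of the three propositions already established in the section, so the proof reduces to a short bookkeeping argument. I will show the two inclusions separately.

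For the forward inclusion $\Sort_n(132,321)\subseteq \Av_n(123,132^\star)$, I simply combine Proposition \ref{avoid_123} and Proposition \ref{avoid_132}: if $x\in \Sort_n(132,321)$, then the first proposition gives $x\in \Av_n(123)$ and the second gives $x\in \Av_n(132^\star)$, so $x\in \Av_n(123,132^\star)$. No further work is needed.

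For the reverse inclusion $\Av_n(123,132^\star)\subseteq \Sort_n(132,321)$, I invoke Proposition \ref{sufficent} directly, which already asserts exactly this containment. Combining the two directions gives the claimed set equality, and since both inclusions are immediate consequences of prior results, there is no genuine obstacle here — the work has already been done in the preceding lemmas and propositions. In particular, the main ingredients (that left-to-right minima never leave the stack early, that non-ltr-minima are forced out of the stack before the next entry, and that together these conditions prevent any $231$ pattern in $s_{132,321}(x)$) have been established, and the theorem only needs to record the resulting equality. I would therefore write the proof as a single sentence citing the three propositions.
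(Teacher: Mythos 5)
Your proposal is correct and matches the paper exactly: the paper likewise obtains the forward inclusion from Propositions \ref{avoid_123} and \ref{avoid_132} and the reverse inclusion from Proposition \ref{sufficent}, presenting the theorem as a direct collation of these three results. No further argument is needed.
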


Our next goal is to show that West's bijection maps $\Av_n(123,132^{\star})$ to $\Av_{n}(132, 123^{\star})$. Recall from \Cref{west_bij} that permutations in $\Av_n(123)$, as well as in $\Av(132)$, are uniquely determined by their signature. West's bijection then follows from the equality between the corresponding sets of signatures
$$
\mathfrak{S}(\Av_n(123),123)=\mathfrak{S}(\Av_n(132),132).
$$ 
We wish to prove that any permutation $x\in\Av_n(123)$ contains $132^{\star}$ if and only if there exists some $1\le i\le n-2$ such that $$
\mathfrak{S}(x; 123)_{i} = \mathfrak{S}(x; 123)_{i+1} \leq \mathfrak{S}(x; 123)_{i+2}.
$$
The desired result follows from the fact that the same property holds when we replace the pattern $123$ with $132$ and the pattern $132^{\star}$ with $123^{\star}$.

Let us take care of $\Av_n(123,132^{\star})$ first. To start, we show that removing the maximum of a permutation preserves the property of avoiding both patterns.

\begin{lemma} \label{induct}
    If $x \in \Av_{n}(123, 132^{\star})$, then $\smallx_{n-1}(x) \in \Av_{n-1}(123, 132^{\star})$. 
\end{lemma}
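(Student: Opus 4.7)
My plan is to set $y := \smallx_{n-1}(x)$, denote by $p$ the position of $n$ in $x$, and argue separately that $y$ avoids each of $123$ and $132^{\star}$. Note that $y_i = x_i$ for $i < p$ and $y_i = x_{i+1}$ for $i \geq p$, so all values other than $n$ are preserved by the deletion, and the relative position order of the remaining entries is preserved as well.

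The $123$-avoidance is automatic: any occurrence $y_a y_b y_c$ of $123$ in $y$ lifts via the obvious position shift to a subsequence of $x$ with the same three values in the same relative order, giving an occurrence of $123$ in $x$ and contradicting $x \in \Av_n(123)$. For $132^{\star}$, I would suppose toward contradiction that $y$ contains an occurrence at positions $a < b < b+1$, so $y_a < y_{b+1} < y_b$ and $y_b = y_{b+1} + 1$, and then case on the location of $p$. If $p \neq b+1$, a short case check (subdivided into $p \leq a$, $a < p \leq b$, and $p > b+1$) shows that under the position shift the last two entries of the would-be occurrence land at consecutive indices in $x$; since values are untouched the value-adjacency $y_b = y_{b+1}+1$ persists, producing an occurrence of $132^{\star}$ in $x$ and contradicting $x \in \Av_n(132^{\star})$. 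The only remaining case is $p = b+1$, in which $n$ has been inserted between the previously consecutive pair: then positions $a, b, b+1$ of $x$ carry the values $y_a, y_b, n$, and since $y_a < y_b < n$ these form a $123$ in $x$, again contradicting the hypothesis.

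The one step requiring care is the adjacency bookkeeping in the $p \neq b+1$ branch, but it is essentially mechanical. Conceptually, the mesh-pattern adjacency conditions built into $132^{\star}$ are exactly what makes the single deletion harmful only when $n$ sits strictly between the two adjacent positions of the pattern---and in that single "bad" case, the presence of $n$ itself supplies the missing top element of a forbidden $123$ in $x$, closing the argument.
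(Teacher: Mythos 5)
Your proof is correct and follows essentially the same route as the paper's: the $123$ part is immediate, and for $132^{\star}$ the only problematic case is when $n$ sits between the two adjacent positions of the occurrence, where $n$ then completes a forbidden $123$ in $x$. The paper merely compresses your case analysis by observing up front that, since $x$ itself avoids $132^{\star}$, any occurrence in $\smallx_{n-1}(x)$ must have its adjacent pair straddling the deleted position $\indx_{x}(n)$.
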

\begin{proof}
    It is clear that $\smallx_{n-1}(x) \in \Av_{n-1}(123)$. Suppose that $\smallx_{n-1}(x) \not \in \Av_{n-1}(132^{\star})$. Then $x_j x_{\mathrm{ind_{x}(n)}-1} x_{\mathrm{ind_{x}(n)}+1}$ must be a $132^{\star}$ pattern in $\smallx_{n-1}(x)$, because $x \in \Av_{n}(132^{\star})$. However, then $x_{j} x_{\indx_{x}(n)-1} \cdot n$ is a $123$ pattern in $x$---a contradiction to $x \in \Av_{n}(123, 132^{\star})$. 
\end{proof}

Now, let $x\in\Av_n(123)$. It is easy to see that the set of active sites $\Act_1(x;123)$ of $x$ with respect to $123$ forms an interval. The entry $x_{|\Act_{1}(x; 123)|}$ preceding the leftmost site that is not active can be characterized as follows.

\begin{lemma} \label{signature_char}
    Let $x \in \Av_{n}(123)$ and suppose that $x$ is not the decreasing permutation $n(n-1)\dots 1$. Then $x_{|\Act_{1}(x; 123)|} = \max \{ x_i \mid x_{i} + i \ne n+1 \}.$
\end{lemma}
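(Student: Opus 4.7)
The plan is to analyze $|\Act_1(x;123)|$ explicitly and then exploit the $123$-avoidance of $x$ to control which entries can possibly exceed the candidate value. Let $k$ denote the position of the first ascent of $x$, so that
\[
x_1 > x_2 > \dots > x_k < x_{k+1};
\]
this $k$ exists precisely because $x$ is not the decreasing permutation.

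First I would identify the active sites. Since $x$ avoids $123$, inserting $n+1$ into the $i$-th site creates a $123$ pattern if and only if the prefix $x_1 x_2 \dots x_{i-1}$ contains an ascent ($n+1$ being forced to play the role of the largest element of any new pattern). This prefix is decreasing exactly when $i \leq k+1$, so the active sites form the interval $\{1,2,\dots,k+1\}$, giving $|\Act_1(x;123)| = k+1$ and $x_{|\Act_1(x;123)|} = x_{k+1}$.

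Next I would locate the values of $x$ that exceed $x_{k+1}$. Any entry $x_j > x_{k+1}$ with $j \geq k+2$ would yield a $123$ pattern $x_k x_{k+1} x_j$, contradicting $123$-avoidance, so every such entry sits in positions $1,\dots,k$. Because $x_1 > x_2 > \dots > x_k$ is decreasing, the $a := n - x_{k+1}$ values in $\{x_{k+1}+1,\dots,n\}$ must fill the prefix $x_1,\dots,x_a$ in decreasing order as $n, n-1, \dots, n-a+1$. In particular, $x_j = n+1-j$ for $j=1,\dots,a$, so all of these positions lie on the antidiagonal and are excluded from the set $\{x_i \mid x_i + i \neq n+1\}$.

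Finally I would rule out $a = k$: otherwise $x_{k+1} = n-a = n-k$ while $x_k = n+1-k$, contradicting $x_k < x_{k+1}$. Hence $a < k$, which yields $x_{k+1} + (k+1) = (n-a)+(k+1) \neq n+1$, so $x_{k+1}$ itself lies in the set. Every remaining entry is strictly smaller than $x_{k+1}$: positions $a+1,\dots,k$ hold values below $x_a = x_{k+1}+1$, and positions $k+2,\dots,n$ hold values below $x_{k+1}$ by the $123$-avoidance argument above. This gives $x_{|\Act_1(x;123)|} = x_{k+1} = \max\{x_i \mid x_i + i \neq n+1\}$, as desired. The main obstacle is the structural observation in the second step: once one sees that the large values are pinned to an initial prefix whose entries are forced to lie on the antidiagonal, the antidiagonal condition in the statement follows essentially by arithmetic.
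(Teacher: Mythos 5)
Your proof is correct, but it takes a different route from the paper's. The paper argues by induction on $n$: when $x_1\ne n$ it shows directly that $\Act_1(x;123)=\{1,\dots,\indx_x(n)\}$, so $x_{|\Act_1(x;123)|}=n$ is the claimed maximum, and when $x_1=n$ it uses the identity $|\Act_1(x;123)|=|\Act_2(x;123)|+1$ to peel off the leading maximum and recurse. You instead give a single direct structural argument: the active sites are exactly $\{1,\dots,k+1\}$ where $k$ is the position of the first ascent, and $123$-avoidance pins every value exceeding $x_{k+1}$ into the initial decreasing run at positions $1,\dots,a$ with $x_j=n+1-j$, i.e.\ precisely onto the antidiagonal excluded by the condition $x_i+i\ne n+1$. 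Your version has the advantage of explaining \emph{why} the antidiagonal condition appears in the statement, and it isolates the reusable fact $|\Act_1(x;123)|=k+1$; the paper's induction is shorter to write and dovetails with the recursive signature identity $\mathfrak{S}(x;123)=|\Act_1(x;123)|\cdot\mathfrak{S}(\smallx_{n-1}(x);123)$ that is used repeatedly afterwards. The only cosmetic blemish in your write-up is the reference to $x_a$ when $a=0$ (i.e.\ when $x_{k+1}=n$), but in that case the bound on positions $a+1,\dots,k$ is trivial, so nothing breaks.
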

\begin{proof}
    We induct on $n.$ The statement is vacuously true for $n = 1.$ First, suppose that $x_1 \ne n$. Clearly, $x^j \in \Av_n(123)$ for $1 \leq j \leq \indx_x(n),$ since if $(x^j)_a \cdot (x^j)_b \cdot (n+1)$ is a $123$ pattern in $x^j,$ then $x_a x_b n$ is a $123$ pattern in $x$. Conversely, if $j > \indx_x(n)$, then $x^j_1 \cdot n \cdot (n+1)$ is a $123$ pattern in $x^j,$ and so $j \not \in \Act_{1}(x; 123)$. Hence, $\Act_{1}(x; 123) = \{1, 2, \cdots, \indx_x(n)\}$ and $x_{|\Act_1(x; 123)|} = \max \{ x_i \mid x_{i} + i \ne n+1 \} = n$.

    Now, suppose that $x_{1} = n$. Then $\Act_{1}(x;123) = \{j\mid j-1 \in \Act_2(x;123) \} \cup \{1\}$, because no $123$ pattern in $x^{j}$ for $1 \leq j \leq n+1$ begins with $n$ or $n+1$. Thus, $|\Act_{1}(x;123)| = |\Act_2(x;123)| + 1$, and the statement of the lemma follows by the inductive hypothesis.
\end{proof}


\begin{proposition} \label{necessary}
    If $x \in \Av_n(123)$ and $|\Act_{i}(x; 123)| = \mathfrak
|\Act_{i+1}(x; 123)| \leq |\Act_{i+2}(x; 123)|$ for some $1 \leq i \leq n-2$, then $x \not\in \Av_n(132^{\star}).$
\end{proposition}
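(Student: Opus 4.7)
The plan is to convert the signature hypothesis into rigid structural information about the two largest entries of $\smallx_{m+2}(x)$, and then to exhibit an explicit occurrence of $132^\star$ in $x$. For $1 \le k \le n$, let $\alpha_k$ denote the number of active sites of $\smallx_k(x)$ (with respect to $123$) for inserting the value $k+1$; then $|\Act_j(x;123)| = \alpha_{n+1-j}$. Writing $m = n-1-i$, the hypothesis becomes the existence of some $m$ with $1 \le m \le n-2$ satisfying $\alpha_{m+2} = \alpha_{m+1} \le \alpha_m$. Put $w := \smallx_{m+2}(x)$ and $w' := \smallx_{m+1}(x)$, so that $w'$ is obtained from $w$ by deleting the maximum entry $m+2$; both $w$ and $w'$ lie in $\Av(123)$.

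Step 1: locate the maxima of $w$ and $w'$. The analysis in the proof of \Cref{signature_char} applied to $w$ yields $\alpha_{m+2} = \alpha_{m+1}+1$ if $w_1 = m+2$, and $\alpha_{m+2} = \indx_w(m+2)$ otherwise; an analogous dichotomy holds for $w'$. The equality $\alpha_{m+2} = \alpha_{m+1}$ then forces $w_1 \ne m+2$, so that $p := \indx_w(m+2) > 1$ and $\alpha_{m+2} = p$; similarly, the inequality $\alpha_{m+1} \le \alpha_m$ rules out $w'_1 = m+1$, so $q := \indx_{w'}(m+1) > 1$ and $\alpha_{m+1} = q$. Consequently $p = q$, and since $w'$ is obtained from $w$ by deleting the entry at position $p$, the entry in position $q = p$ of $w'$ is precisely $w_{p+1}$; hence $w_p = m+2$ and $w_{p+1} = m+1$.

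Step 2: transfer to $x$ and extract the pattern. Let $b := \indx_x(m+2)$ and $a := \indx_x(m+1)$; then $b < a$, and every position strictly between $b$ and $a$ in $x$ carries a value greater than $m+2$, since $m+2$ and $m+1$ occupy consecutive slots of the subsequence $\smallx_{m+2}(x)$. Because $p > 1$, the entry $w_1$ cannot equal $m+1$ or $m+2$ (which lie at positions $p+1$ and $p$ respectively), so $w_1 \le m$, and $w_1 = x_{t_1}$ for some position $t_1 < b$ of $x$. If $a > b+1$, then $x_{b+1} > m+2$, and the inequalities $w_1 < m+2 < x_{b+1}$ together with $t_1 < b < b+1$ exhibit a $123$-pattern $x_{t_1}\,x_b\,x_{b+1}$ in $x$, contradicting $x \in \Av_n(123)$. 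Hence $a = b+1$, so that $x_b = m+2$ and $x_{b+1} = m+1$ are adjacent in $x$ with consecutive values; then $x_{t_1}, x_b, x_{b+1}$ is a $132$-pattern whose middle and last entries are adjacent both in position and in value, i.e., an occurrence of $132^\star$ in $x$.

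The delicate point is Step 1: the equality $\alpha_{m+2} = \alpha_{m+1}$ alone does not force $m+1$ and $m+2$ to sit at adjacent positions of $w$, and one must additionally use $\alpha_{m+1} \le \alpha_m$ to exclude the configuration $w'_1 = m+1$. Once this structural rigidity is in hand, the 123-avoidance of $x$ is used in a single line to bring $m+1$ and $m+2$ themselves into adjacent positions of $x$, after which the $132^\star$ pattern is immediate.
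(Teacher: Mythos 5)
Your proof is correct, but it is organized quite differently from the paper's. The paper argues by induction on $n$: the recursive structure of the signature together with \Cref{induct} reduces everything to the case $i=1$, where \Cref{signature_char} forces $x_1\le n-2$ and forces $n\cdot(n-1)$ to be a substring, producing the occurrence $x_1\cdot n\cdot(n-1)$ of $132^{\star}$. You instead work non-inductively at the level $m+2=n+1-i$ itself: the same dichotomy from the proof of \Cref{signature_char} (applied to $w=\smallx_{m+2}(x)$ and $w'=\smallx_{m+1}(x)$) pins $m+2$ and $m+1$ to adjacent slots of $w$, and then a single application of $123$-avoidance of $x$ promotes that adjacency in the subsequence to adjacency in $x$, yielding the explicit occurrence $x_{t_1}\cdot(m+2)\cdot(m+1)$. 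Your last step plays exactly the role that the contrapositive of \Cref{induct} plays in the paper (lifting a $132^{\star}$ occurrence from $\smallx_{k}(x)$ up to $x$), but done in one shot rather than iterated through the induction; what you gain is a self-contained argument that exhibits the $132^{\star}$ pattern in $x$ directly, and your correct observation that the inequality $\alpha_{m+1}\le\alpha_m$ (not just the equality $\alpha_{m+2}=\alpha_{m+1}$) is what excludes the configuration $w'_1=m+1$ mirrors the paper's exclusion of $x_1=n-1$. All the individual steps check out, including the translation $|\Act_j(x;123)|=\alpha_{n+1-j}$, the deduction $w_p=m+2$, $w_{p+1}=m+1$ from $p=q$, and the verification that $x_{t_1}x_bx_{b+1}$ meets both adjacency conditions in the definition of $132^{\star}$.
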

\begin{proof}
We induct on $n$; it is easy to see that the statement holds for $n \leq 3$. Now, because $\mathfrak{S}_{n}(x; 123) = |\Act_{1}(x; 123)| \cdot \mathfrak{S}_{n}(\smallx_{n-1}(x; 123))$, if $n \geq 4$ and $2 \leq i \leq n-2$, then by \Cref{induct} and the inductive hypothesis, $x \not \in \Av_{n}(132^{\star})$. Thus, assume $i = 1$.

 Next, if $x_1 =n$, then $|\Act_1(x; 123)| = |\Act_2(x;123)| + 1$ by \Cref{signature_char}. Similarly, if $x_1 = n-1$, then $|\Act_2(x; 123)| = |\Act_3(x;123)| + 1$ by \Cref{signature_char} on $\smallx_{n-1}(x)$. Thus, $x_1 \leq n-2$. 

 Now, $n$ appears to the left of $n-1$ in $x$, because $x \in \Av_{n}(123)$ and $x_1 \leq n-2$. But if $n \cdot (n-1)$ is not a substring of $x$, then by \Cref{signature_char} and $x_1 \leq n-2$,  
 \begin{align*}
     |\Act_1(x; 123)| = \indx_x(n) < \indx_{\smallx_{n-1}(x)} (n-1) = |\Act_2(x; 123)|.
 \end{align*}
Thus, $x \not\in \Av_{n}(132^{\star})$, because $x_1 \cdot n \cdot (n-1)$ form a $132^{\star}$ pattern in $x$.
\end{proof}

The converse of \Cref{necessary} is also true, as we show below.
 
\begin{proposition} \label{sufficient2}
    If $x \in \Av_n(123) \setminus \Av_{n}(132^{\star})$, then for some $1 \leq i \leq n-2$, it holds that $|\Act_{i}(x; 123)| = 
|\Act_{i+1}(x; 123)| \leq |\Act_{i+2}(x; 123)|$. 
\end{proposition}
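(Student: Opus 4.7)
The plan is to prove \Cref{sufficient2} by strong induction on $n$, mirroring the structure of \Cref{necessary}. The base cases $n\le 3$ are direct: for $n\le 2$ the set $\Av_n(123)\setminus\Av_n(132^{\star})$ is empty, and for $n=3$ its only element is $x = 132$, whose signature (extended by $|\Act_3(132;123)| = 2$) is $(2,2,2)$, satisfying the desired inequality at $i = 1$.

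For the inductive step, fix $n \ge 4$ and $x \in \Av_n(123)\setminus\Av_n(132^{\star})$, and split on whether $\smallx_{n-1}(x)$ contains $132^{\star}$. If it does, the inductive hypothesis applied to $\smallx_{n-1}(x)$ yields $1\le i\le n-3$ realising the property there; since $\Act_{j+1}(x;123) = \Act_j(\smallx_{n-1}(x);123)$ for all $j\ge 1$, the same property transfers to $x$ at index $i+1\in[2,n-2]$. Otherwise, every $132^{\star}$ occurrence $x_ax_bx_{b+1}$ in $x$ must involve $n$; since $n$ cannot equal $x_a$ (it would contradict $x_a<x_{b+1}$) nor $x_{b+1}$ (it would force $x_b = n+1$), we must have $x_b = n$, $x_{b+1} = n-1$, and some $a<b$ with $x_a < n-1$, so $b\ge 2$. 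Combined with $x\in\Av_n(123)$, this forces $x_1>x_2>\cdots>x_{b-1}$, since an ascent in this prefix together with $x_b = n$ would form a $123$.

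The proof of \Cref{signature_char} shows that a site $j$ of any $y$ is active with respect to $123$ iff $y_1\ldots y_{j-1}$ is strictly decreasing, so $|\Act_1(y;123)|$ equals one plus the length of the longest strictly decreasing prefix of $y$. I apply this three times: for $x$ itself, the decreasing prefix has length exactly $b-1$ (since $x_b = n > x_{b-1}$), giving $|\Act_1(x;123)| = b$; for $\smallx_{n-1}(x)$, whose first $b-1$ entries coincide with $x_1,\ldots,x_{b-1}$ and whose $b$-th entry is $x_{b+1} = n-1 > x_{b-1}$, the decreasing prefix again has length $b-1$, so $|\Act_2(x;123)| = b$; and for $\smallx_{n-2}(x)$, whose first $b-1$ entries are still $x_1,\ldots,x_{b-1}$, the decreasing prefix has length at least $b-1$, so $|\Act_3(x;123)| \ge b$. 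Therefore $i = 1$ works. The main obstacle is the structural analysis in the second case; once the shape of $x$ is pinned down, the three signature values follow routinely from the decreasing-prefix characterisation.
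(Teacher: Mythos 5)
Your proof is correct and follows essentially the same route as the paper's: induction on $n$, reduction (via the inductive hypothesis applied to $\smallx_{n-1}(x)$) to the case where every occurrence of $132^{\star}$ in $x$ has the form $x_a\cdot n\cdot(n-1)$ with $n$ at some position $b\ge 2$ and a decreasing prefix before it, and then verification that $|\Act_1(x;123)|=|\Act_2(x;123)|=b\le|\Act_3(x;123)|$ so that $i=1$ works. Your use of the ``one plus the longest strictly decreasing prefix'' characterization of active sites to compute all three quantities is a cleaner piece of bookkeeping than the paper's derivation of the bound on $|\Act_{3}(x;123)|$ from \Cref{signature_char}, but it is the same argument in substance.
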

\begin{proof}
We induct on $n$; the statement holds for $n \leq 3,$ because $\mathfrak{S}_n(132; 123) = 222.$ Now, suppose for the sake of contradiction that $n \geq 4$ and that there is no $1\leq i \leq n-3$ such that $|\Act_{i}(x; 123)| = \mathfrak |\Act_{i+1}(x; 123)| \leq |\Act_{i+2}(x; 123)|.$ Then, since $\mathfrak{S}_{n}(x; 123) = |\Act_{1}(x; 123)| \cdot \mathfrak{S}_{n}(\smallx_{n-1}(x); 123),$ we have that $\smallx_{n-1}(x) \in \Av_{n-1}(132^{\star})$ from the inductive hypothesis. Now, $x \in \Av_{n}(123) \setminus \Av_{n}(132^{\star})$, but $\smallx_{n-1}(x) \in \Av_{n}(132^{\star})$, and so $\indx_{x}(n) = \indx_{x}(n-1)-1 > 1$.

Now, $|\Act_{1}(x; 123)| = \indx_{x}(n)$ from \Cref{signature_char}. Similarly, $|\Act_{2}(x; 123)| = \indx_{\smallx_{n-1}(x)}(n-1) = \indx_{x}(n)$ from \Cref{signature_char} on $\smallx_{n-1}(x)$. Therefore, $|\Act_{1}(x; 123)| = |\Act_{2}(x; 123)|$. Now, because $x \in \Av_{n}(123)$, the substring $x_{[1,\indx_{x}(n)-1]}$ must be decreasing. Thus, for all $1 \leq j \leq \indx_{x}(n)-1$, it must be that $x_j+j \leq n-1$. Therefore, for each $1 \leq j \leq  \indx_{x}(n)-1$, if $x_j + j \ne n-1$, then $x_j \leq n-2-j$. Thus, $n-2-j + \indx_{x}(n-2-j) \ne n-1$ and by \Cref{signature_char}, $|\Act_{3}(x; 123)| \ne j$. Thus, $|\Act_{3}(x; 123)| \geq |\Act_{1}(x; 123)| = |\Act_{2}(x; 123)|$. 
\end{proof}

Now, propositions \ref{necessary} and \ref{sufficient2} characterize $\Av_{n} (123, 132^{\star})$.

\begin{theorem} \label{thm1}
     For $x \in \Av_{n}(123)$, there exists some $1 \leq i \leq n-2$ such that $\mathfrak{S}_{n}(x; 123)_{i} = \mathfrak{S}_{n}(x; 123)_{i+1} \leq \mathfrak{S}_{n}(x; 123)_{i+2}$ if and only if $x \not\in \Av_{n}(132^{\star})$.
\end{theorem}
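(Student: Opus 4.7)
The plan is to obtain the theorem as an immediate consequence of the two preceding propositions, which together bracket the biconditional from both sides. Concretely, the forward implication is precisely Proposition \ref{necessary}: if $x \in \Av_n(123)$ and there is an index $1 \le i \le n-2$ with $\mathfrak{S}_n(x;123)_i = \mathfrak{S}_n(x;123)_{i+1} \le \mathfrak{S}_n(x;123)_{i+2}$, then $x \notin \Av_n(132^\star)$. The reverse implication is precisely Proposition \ref{sufficient2}: if $x \in \Av_n(123) \setminus \Av_n(132^\star)$, then such an index must exist.

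So my proof would simply assemble the two statements: pick a direction, invoke the appropriate proposition, and conclude. No fresh induction or pattern-chasing is required at this stage, since the inductive work has already been completed in the propositions themselves---most notably the induction on $n$ in Proposition \ref{sufficient2}, which uses Lemma \ref{signature_char} to pin down $|\Act_1(x;123)|$ and $|\Act_2(x;123)|$ in terms of $\indx_x(n)$, and then shows that a $132^\star$ occurrence created by reintroducing $n$ forces $|\Act_3(x;123)| \ge |\Act_1(x;123)|$.

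The only real obstacle was thus in establishing the two propositions; the present theorem is their clean restatement as a signature-based characterization of $\Av_n(123,132^\star)$. I expect the displayed proof to be at most one or two sentences. The reason to single this combined statement out as a theorem---rather than leaving the characterization implicit---is that it is stated in exactly the form one needs to apply West's bijection: by symmetry the analogous equivalence will hold with $123$ and $132$ (and the corresponding bivincular patterns) swapped, and then the equality of signature sets $\mathfrak{S}(\Av_n(123);123)=\mathfrak{S}(\Av_n(132);132)$ will transfer the $132^\star$-avoidance condition to $123^\star$-avoidance, yielding the bijection $\Av_n(123,132^\star)\to\Av_n(132,123^\star)$ announced at the start of the section.
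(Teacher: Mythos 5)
Your proposal is correct and matches the paper exactly: Theorem \ref{thm1} is stated there with no separate proof, being the immediate conjunction of Proposition \ref{necessary} (the forward direction) and Proposition \ref{sufficient2} (the converse). Your remarks about why the statement is packaged this way---to feed into West's bijection via the matching characterization of $\Av_n(132,123^{\star})$---also agree with the paper's subsequent use of it in Theorem \ref{bijection}.
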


Let us now turn our attention to $\Av_{n}(132, 123^{\star})$. We shall replicate the approach used for $\Av_n(123,132^{\star})$ to obtain an analogous characterization of permutations in $\Av_n(132)$ that contain $123^{\star}$.

\begin{lemma} \label{induct2}
    If $x \in \Av_{n}(132, 123^{\star})$, then $\smallx_{n-1}(x) \in \Av_{n-1}(132, 123^{\star})$. 
\end{lemma}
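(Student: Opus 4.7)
The plan is to mirror the argument of \Cref{induct} with the roles of $123$ and $132$ swapped. First, since $\smallx_{n-1}(x)$ is obtained from $x$ by deleting the entry $n$, and removing an entry from a permutation cannot create a new classical pattern, we immediately get $\smallx_{n-1}(x) \in \Av_{n-1}(132)$. The substantive part is to show $\smallx_{n-1}(x) \in \Av_{n-1}(123^{\star})$.

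Next, I would assume for contradiction that $\smallx_{n-1}(x)$ contains an occurrence of $123^{\star}$, and analyze how this occurrence relates to $x$. The key observation is that deleting $n$ from $x$ can only introduce the following kind of new $123^{\star}$ occurrence: since $n$ is the maximum entry, its removal does not alter the adjacency among values $\leq n-1$, so the only new adjacencies are positional. Specifically, the two positions $\indx_{x}(n)-1$ and $\indx_{x}(n)+1$ of $x$ become adjacent in $\smallx_{n-1}(x)$. Consequently, any occurrence of $123^{\star}$ in $\smallx_{n-1}(x)$ that is not already present in $x$ must use these two positions as the adjacent pair in the $123^{\star}$ pattern.

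Therefore I would write the new $123^{\star}$ occurrence as $x_{j}\,x_{\indx_{x}(n)-1}\,x_{\indx_{x}(n)+1}$ with $x_{j}<x_{\indx_{x}(n)-1}<x_{\indx_{x}(n)+1}$ and $x_{\indx_{x}(n)+1}=x_{\indx_{x}(n)-1}+1$. Now I would pivot back to $x$ and consider the subsequence at positions $j, \indx_{x}(n), \indx_{x}(n)+1$, whose values are $x_{j}, n, x_{\indx_{x}(n)+1}$. Because $x_{j}<x_{\indx_{x}(n)+1}<n$, this subsequence is a $132$ pattern in $x$, contradicting $x \in \Av_{n}(132)$.

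The only real subtlety is handling the boundary cases $\indx_{x}(n)\in\{1,n\}$: in these cases deletion of $n$ creates no new positional adjacency, so any $123^{\star}$ in $\smallx_{n-1}(x)$ already appears in $x$, making the conclusion immediate. Beyond that, the argument is a direct analogue of \Cref{induct}, using $132$-avoidance of $x$ in the final step where \Cref{induct} used $123$-avoidance.
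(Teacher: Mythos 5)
Your proposal is correct and follows essentially the same route as the paper: both arguments observe that deleting the maximum can only create the new positional adjacency between positions $\indx_x(n)-1$ and $\indx_x(n)+1$, so any new $123^{\star}$ occurrence must be $x_j\,x_{\indx_x(n)-1}\,x_{\indx_x(n)+1}$, and then both convert this into the $132$ pattern $x_j\cdot n\cdot x_{\indx_x(n)+1}$ in $x$ to reach a contradiction. Your explicit treatment of the boundary cases $\indx_x(n)\in\{1,n\}$ is a small addition the paper leaves implicit.
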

\begin{proof}
    It is clear that $\smallx_{n-1}(x) \in \Av_{n-1}(132)$. It thus suffices to show that $\smallx_{n-1}(x) \in \Av_{n-1}(123^{\star})$. Suppose otherwise. Because $x \in \Av_{n}(123^{\star})$, the $123^{\star}$ pattern in $\smallx_{n-1}(x)$ must be $\smallx_{n-1}(x)_{j} \smallx_{n-1}(x)_{\indx_{x}(n)-1} \smallx_{n-1}(x)_{\indx_{x}(n)}$ for some $j < \indx_{x}(n)-1$. However, then $x_{j} \cdot n \cdot \smallx_{n-1}(x)_{\indx_{x}(n)}$ is a $132$ pattern in $x$---a contradiction to $x \in \Av_{n}(123, 132^{\star})$. 
\end{proof}

Given $x \in \Av_{n}(132)$, we express $\Act_{1}(x; 132)$ in terms of $\Act_{2}(x; 132)$. 

\begin{lemma} \label{formula}
    If $x \in \Av_{n}(132)$, then $$\Act_{1}(x; 132) =\{i \mid i-1 \in \Act_{2}(x; 132) \cap [\indx_{x}(n), n]\} \cup \{ 1\}.$$
\end{lemma}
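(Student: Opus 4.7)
The plan is to prove the identity by giving a direct geometric characterization of $\Act_{1}(x;132)$ and then comparing the characterizations for $x$ and for $\smallx_{n-1}(x)$. First, I would observe that since $n+1$ is larger than every entry of $x$, any occurrence of $132$ in $x^{i}$ must use the new letter $n+1$ as the middle (largest) letter of the pattern; combined with $x \in \Av_n(132)$, this shows that site $i$ is active precisely when there are no indices $a < i \leq b$ with $x_a < x_b$. Since $x$ is a permutation of $[n]$, this in turn is equivalent to the prefix carrying the top $i-1$ values:
\[
 i \in \Act_{1}(x;132) \iff \{x_1,\dots,x_{i-1}\} = \{n-i+2,\dots,n\}.
\]
Call this equivalence $(\star)$.

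With $(\star)$ in hand, I would argue as follows. The condition is vacuous for $i = 1$, which accounts for the $\{1\}$ summand on the right-hand side. For $i \geq 2$, $(\star)$ forces $n \in \{x_1,\dots,x_{i-1}\}$, so $k := \indx_x(n) \leq i-1$; in particular every active site $i \geq 2$ must satisfy $i-1 \in [k,n]$. Conversely, whenever $i-1 \geq k$, deleting the entry $x_k = n$ from the prefix $\{x_1,\dots,x_{i-1}\}$ turns it into the first $i-2$ entries of $y := \smallx_{n-1}(x)$, so $(\star)$ at site $i$ of $x$ becomes
\[
 \{y_1,\dots,y_{i-2}\} = \{(n-1)-(i-1)+2,\dots,n-1\},
\]
which is exactly $(\star)$ applied to $y$ at site $i-1$. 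Since $\Act_{2}(x;132) = \Act_{1}(y;132)$, this translates into membership of $i-1$ in $\Act_{2}(x;132)$ and yields the claimed identity.

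The only mildly delicate step I anticipate is the bookkeeping of positions before and after deleting the entry $n$, but because the test in $(\star)$ is a pure set equality on prefixes, no real index juggling is needed: positions $<k$ in $x$ are unchanged in $y$, while positions $>k$ merely shift down by one. I therefore expect the proof to reduce to establishing $(\star)$ and then reading off both inclusions from it, with no further obstacle.
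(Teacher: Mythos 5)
Your proposal is correct and follows essentially the same route as the paper: both rest on the characterization $i \in \Act_{1}(x;132) \iff \{x_{[1,i-1]}\} = [n-i+2,n]$ (which the paper asserts as clear and you justify via the observation that $n+1$ must play the role of the ``3''), followed by the case split on whether the prefix contains $n$, i.e.\ whether $i-1 \geq \indx_x(n)$. Your write-up is slightly more detailed on the deletion bookkeeping, but the argument is the same.
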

\begin{proof}
    It is clear that $i \in \Act_{1}(x; 132)$ if and only if $\{x_{[1, i-1]}\} = [n-i+2,n]$. Thus, if $i \leq \indx_{x}(n)$, then $i \in \Act_{1}(x; 132)$ if and only if $i = 1$. But, if $\indx_{x}(n) + 1 \leq i$, then $\{x_{[1,i-1]}\} = [n-i+2, n]$ if and only if $\{\smallx(x)_{[1, i-2]}\} = [n-i+2, n-1]$ if and only if $ i-1 \in \Act_{2}(x; 132)$. 
\end{proof}

The following results are the analogs of \Cref{necessary} and \Cref{necessary2} for the $132$ pattern. 

\begin{proposition} \label{necessary2}
 If $x \in \Av_n(132)$ and $|\Act_{i}(x; 132)| = 
|\Act_{i+1}(x; 132)| \leq |\Act_{i+2}(x; 132)|$ for some $1 \leq i \leq n-2$, then $x \not\in \Av_n(123^{\star}).$
\end{proposition}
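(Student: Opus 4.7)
The plan is to mirror the proof of \Cref{necessary}, now using the $132$-side lemmas \Cref{induct2} and \Cref{formula} in place of their $123$-side counterparts \Cref{induct} and \Cref{signature_char}. I would induct on $n$, with the statement verified directly for small $n$. For the inductive step with $i\ge 2$, the identity $\Act_j(x;132)=\Act_{j-1}(\smallx_{n-1}(x);132)$ valid for $j\ge 2$ rewrites the hypothesis as the inductive hypothesis applied to $\smallx_{n-1}(x)\in\Av_{n-1}(132)$, and the contrapositive of \Cref{induct2} lifts the conclusion $\smallx_{n-1}(x)\notin\Av_{n-1}(123^{\star})$ back to $x\notin\Av_n(123^{\star})$. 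It therefore suffices to treat $i=1$.

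Set $p=\indx_x(n)$. By \Cref{formula}, the condition $|\Act_1(x;132)|=|\Act_2(x;132)|$ is equivalent to $|\Act_2(x;132)\cap[1,p-1]|=1$, which, since $1\in\Act_2(x;132)$ trivially, reduces to $\Act_2(x;132)\cap[2,p-1]=\emptyset$. Two boundary cases are then eliminated directly. If $p=1$, then $\Act_2(x;132)\cap[p,n]=\Act_2(x;132)$ and \Cref{formula} gives $|\Act_1|=|\Act_2|+1$, a contradiction. If $p=2$, then $132$-avoidance forces $x_1=n-1$, so $n-1$ sits at position $1$ of $\smallx_{n-1}(x)$; applying \Cref{formula} to $\smallx_{n-1}(x)$ yields $|\Act_2(x;132)|=|\Act_3(x;132)|+1$, which contradicts $|\Act_2|\le|\Act_3|$. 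Hence $p\ge 3$.

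With $p\ge 3$, $132$-avoidance forces $\{x_1,\ldots,x_{p-1}\}=\{n-p+1,\ldots,n-1\}$. Let $r=\indx_x(n-1)\in[1,p-1]$. Applying $132$-avoidance inside the left block $L=x_{[1,p-1]}$ shows that the entries preceding $n-1$ in $L$ are exactly the $r-1$ largest values of $\{n-p+1,\ldots,n-2\}$, so $\{x_{[1,r]}\}=\{n-r,n-r+1,\ldots,n-1\}$. This is precisely the condition for $r+1$ to lie in $\Act_2(x;132)$, so the emptiness condition $\Act_2(x;132)\cap[2,p-1]=\emptyset$ forces $r+1\ge p$, that is, $r=p-1$. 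Consequently $x_{p-1}x_p=(n-1)n$ is a substring with consecutive values in consecutive positions, and since $p\ge 3$ the entry $x_1\le n-2$ together with $x_{p-1}x_p$ produces an occurrence of $123^{\star}$ in $x$.

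The step I expect to be hardest is the structural analysis in the third paragraph, where the exact position of $n-1$ must be pinned down. The hypothesis $|\Act_1|=|\Act_2|\le|\Act_3|$ is strong in that it rules out an entire interval of potential active sites of $\smallx_{n-1}(x)$, and \Cref{formula} is the bridge that translates this into a statement about where the top values of $x$ can sit. Once one recognises that $\{x_{[1,r]}\}$ is automatically $\{n-r,\ldots,n-1\}$, thanks to the $132$-structure of $L$, the $(n-1)n$ substring is forced and the $123^{\star}$ occurrence falls out together with the earliest entry $x_1$.
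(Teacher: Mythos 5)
Your proof is correct and follows essentially the same route as the paper's: induct on $n$, reduce to $i=1$ via \Cref{induct2}, use \Cref{formula} to rule out the degenerate positions of $n$ and $n-1$, show that $\indx_x(n-1)+1$ would be an extra active site of $\smallx_{n-1}(x)$ unless $(n-1)\cdot n$ is a substring, and conclude with the occurrence $x_1\cdot(n-1)\cdot n$ of $123^{\star}$. The only difference is cosmetic—you organize the cases by $p=\indx_x(n)$ and state the condition as $\Act_2(x;132)\cap[2,p-1]=\emptyset$, whereas the paper cases on the value of $x_1$—but the key lemmas and the final contradiction are identical.
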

\begin{proof}
We induct on $n$; it is easy to see that the statement holds for $n \leq 3$. Now, because $\mathfrak{S}_{n}(x; 132) = |\Act_{1}(x; 132)| \cdot \mathfrak{S}_{n}(\smallx_{n-1}(x; 132))$, if $n \geq 4$ and $2 \leq i \leq n-2$, then by \Cref{induct2} and the inductive hypothesis, $x \not \in \Av_{n}(123^{\star})$. Thus, assume $i = 1$.

Now, if $x_1 = n,$ then by \Cref{formula}, $|\Act_{1}(x; 132)| = 
|\Act_{2}(x; 132)| +1$. Similarly, if $x_1 = n-1,$ then $|\Act_2(x; 132)| = |\Act_3(x; 132)| +1$ by \Cref{formula} on $\smallx_{n-1}(x)$. Thus, $x_1 \leq n-2.$ Now, $n$ appears to the right of $n-1$ in $x$, because $x \in \Av_n(132)$ and $x_1 \leq n-2$. Furthermore, $x \in \Av_{n}(132)$ implies that $\{x_{[1, \indx_{x}(n-1)]}\} = [n-\indx_{x}(n-1),n-1]$ and $\{x_{[1, \indx_{x}(n)]}\} = [n-\indx_{x}(n),n]$. Thus, if $(n-1) \cdot n$ is not a substring of $x,$ then $\{\indx_{x}(n-1)+1, \indx_{x}(n)\} \in \Act_2(x; 132)$, from which it follows that $\Act_1(x; 132) \leq \Act_2(x; 132) - 1$ by \Cref{formula}. Thus, $x \notin \Av_n(123^{\star}),$ because $x_1 \cdot (n-1) \cdot n$ is a $123^{\star}$ pattern of $x.$
\end{proof}

\begin{proposition} \label{sufficient3}
    If $x \in \Av_n(132) \setminus \Av_{n}(123^{\star})$, then for some $1 \leq i \leq n-2$, it holds that $|\Act_{i}(x; 132)| = \mathfrak
|\Act_{i+1}(x; 132)| \leq |\Act_{i+2}(x; 132)|$. 
\end{proposition}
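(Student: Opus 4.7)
The plan is to mirror the proof of Proposition~\ref{sufficient2}, exchanging the roles of patterns ($132$ in place of $123$ and $123^{\star}$ in place of $132^{\star}$) and using Lemma~\ref{formula} in the role played by Lemma~\ref{signature_char} before. I would proceed by induction on $n$. The base case is $n \leq 3$: the set $\Av_n(132) \setminus \Av_n(123^{\star})$ is empty for $n \leq 2$ and consists only of $x = 123$ for $n = 3$, with $\mathfrak{S}(123; 132) = 222$, so $i = 1$ works.

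For the inductive step with $n \geq 4$, I would suppose for contradiction that no $1 \leq i \leq n - 2$ works for $x$. Using $|\Act_{i+1}(x; 132)| = |\Act_i(\smallx_{n-1}(x); 132)|$, this forces no $1 \leq i' \leq n - 3$ to work for $\smallx_{n-1}(x) \in \Av_{n-1}(132)$, so by the inductive hypothesis $\smallx_{n-1}(x) \in \Av_{n-1}(123^{\star})$. Combined with $x \notin \Av_n(123^{\star})$, the $123^{\star}$ pattern in $x$ must involve $n$; being maximal, $n$ plays the role of $z_3$, which forces $\indx_x(n - 1) = \indx_x(n) - 1$ along with $\indx_x(n) \geq 3$ from the existence of $z_1$.

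I would then verify $|\Act_1(x;132)| = |\Act_2(x;132)|$. Lemma~\ref{formula} gives $|\Act_1(x;132)| = 1 + |\Act_2(x;132) \cap [\indx_x(n), n]|$, so the equality reduces to $|\Act_2(x;132) \cap [1, \indx_x(n) - 1]| = 1$. Applying Lemma~\ref{formula} to $y := \smallx_{n-1}(x)$ yields $\Act_2(x;132) = \Act_1(y; 132) = \{1\} \cup \{i \mid i - 1 \in \Act_2(y; 132) \cap [\indx_y(n - 1), n - 1]\}$; since $\indx_y(n - 1) = \indx_x(n) - 1$, every non-$1$ element of $\Act_1(y;132)$ is at least $\indx_x(n)$, so $\Act_2(x;132) \cap [1, \indx_x(n) - 1] = \{1\}$, as needed.

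For $|\Act_2(x;132)| \leq |\Act_3(x;132)|$, Lemma~\ref{formula} applied to $y$ gives $|\Act_2(x;132)| = 1 + |\Act_3(x;132) \cap [\indx_y(n - 1), n - 1]|$, so $|\Act_3(x;132)| - |\Act_2(x;132)| = |\Act_3(x;132) \cap [1, \indx_y(n - 1) - 1]| - 1$. Because $\indx_y(n - 1) = \indx_x(n) - 1 \geq 2$ and site $1$ is always active in any $\Av(132)$ permutation (inserting the maximum at the front cannot create a $132$), this intersection contains $1$, making the difference non-negative. Thus $i = 1$ works, contradicting our assumption. The main obstacle is the careful index-tracking across two invocations of Lemma~\ref{formula} (applied to $x$ and to $y$) so that the position of $n - 1$ before and after removing $n$, and the relation $\indx_y(n - 1) = \indx_x(n) - 1$, are used consistently.
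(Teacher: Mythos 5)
Your proposal is correct and follows essentially the same route as the paper's proof: induction on $n$, using the contradiction hypothesis plus the inductive hypothesis to conclude $\smallx_{n-1}(x)\in\Av_{n-1}(123^{\star})$, deducing $\indx_x(n-1)=\indx_x(n)-1>1$, and then applying Lemma~\ref{formula} to $x$ and to $\smallx_{n-1}(x)$ to get $|\Act_1(x;132)|=|\Act_2(x;132)|\leq|\Act_3(x;132)|$. Your version merely spells out the index bookkeeping (e.g.\ that $\Act_2(x;132)\cap[1,\indx_x(n)-1]=\{1\}$ and that $1$ is always an active site) a bit more explicitly than the paper does.
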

\begin{proof}
    We induct on $n$; the statement holds for $n \leq 3,$ because $\mathfrak{S}_n(123; 132) = 222.$ Now, suppose that $n \geq 4$ and suppose for the sake of contradiction that there is no $i$ such that $|\Act_{i}(x; 132)| = \mathfrak |\Act_{i+1}(x; 132)| \leq |\Act_{i+2}(x; 132)|.$ Then, since $\mathfrak{S}_{n}(x; 132) = |\Act_{1}(x; 132)| \cdot \mathfrak{S}_{n}(\smallx_{n-1}(x); 132),$ we have that $\smallx_{n-1}(x) \in \Av_{n-1}(123^{\star})$ from the inductive hypothesis. Now, $x \in \Av_{n}(132) \setminus \Av_{n}(123^{\star})$ but $\smallx_{n-1}(x) \in \Av_{n}(123^{\star})$, and so $\indx_{x}(n-1) = \indx_{x}(n)-1 > 1$.

    Now, because $\indx_{x}(n-1) = \indx_{x}(n)-1$, it holds that $\Act_{2}(x; 132) \setminus [\indx_{x}(n), n] = \{1\}$. Therefore, $|\Act_{1}(x; 132)| =|\{i \mid i-1 \in \Act_{2}(x; 132) \cap [\indx_{x}(n), n]\}| +1 = |\Act_{2}(x; 132)|$ by \Cref{formula}. Furthermore, because $\indx_{x}(n-1) > 1 $, it follows from \Cref{formula} that $|\Act_{2}(x; 132)|=|\{i \mid i-1 \in \Act_{3}(x; 132) \cap [\indx_{x}(n-1), n-1]\} \cup \{1\}| \leq |\Act_{3}(x; 132)|$. Now, $|\Act_{1}(x; 123)| = 
|\Act_{2}(x; 123)| \leq |\Act_{3}(x; 123)|$---a contradiction as sought. 
\end{proof}

Once again, we obtain a characterization of $\Av_n(132, 123^{\star})$.

\begin{theorem} \label{thm2}
For $x \in \Av_{n}(132)$, there exists some $1 \leq i \leq n-2$ such that $\mathfrak{S}_{n}(x; 132)_{i} = \mathfrak{S}_{n}(x; 132)_{i+1} \leq \mathfrak{S}_{n}(x; 132)_{i+2}$ if and only if $x \not\in \Av_{n}(123^{\star})$.
\end{theorem}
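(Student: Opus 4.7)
The plan is that Theorem~\ref{thm2} is essentially a packaging of the two preceding Propositions~\ref{necessary2} and~\ref{sufficient3}, which together give the two implications of the biconditional for $x\in\Av_n(132)$.

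For the forward direction, assume there exists some $1\le i\le n-2$ with
$\mathfrak{S}_n(x;132)_i = \mathfrak{S}_n(x;132)_{i+1} \le \mathfrak{S}_n(x;132)_{i+2}$.
This is exactly the hypothesis of Proposition~\ref{necessary2}, whose conclusion is that $x\notin\Av_n(123^{\star})$. For the reverse direction, assume $x\in\Av_n(132)$ and $x\notin\Av_n(123^{\star})$; this matches the hypothesis of Proposition~\ref{sufficient3}, whose conclusion is the existence of the required index $i$. So the proof is a one-line appeal to these two propositions, entirely parallel to how Theorem~\ref{thm1} follows from Propositions~\ref{necessary} and~\ref{sufficient2}.

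There is no real obstacle here, since all the technical content (the reductions via Lemma~\ref{induct2} to the case $i=1$, the key computations using Lemma~\ref{formula} on active sites in $\Av_n(132)$, and the induction arguments) has already been carried out in the proofs of Propositions~\ref{necessary2} and~\ref{sufficient3}. The theorem statement is simply the biconditional form that one obtains by combining the two one-directional statements, and this is the form that will be used downstream, together with West's bijection and the analogous Theorem~\ref{thm1}, to identify $\Av_n(123,132^{\star})$ with $\Av_n(132,123^{\star})$ via equality of signature sets.
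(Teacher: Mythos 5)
Your proposal is correct and matches the paper exactly: the paper states Theorem~\ref{thm2} with no separate proof, presenting it as the immediate combination of Propositions~\ref{necessary2} and~\ref{sufficient3}, just as Theorem~\ref{thm1} packages Propositions~\ref{necessary} and~\ref{sufficient2}. Nothing further is needed.
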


As an immediate corollary of \Cref{thm1} and \Cref{thm2}, West's bijection maps $\Av_{n}(123, 132^{\star})$ to $\Av_{n}(132, 123^{\star})$. We state this in the following theorem. 

\begin{theorem} \label{bijection}
    West's bijection~\cite{WEST1995247} bijects $\Av_{n}(123, 132^{\star})$ and $\Av_{n}(132, 123^{\star})$. Therefore,
    $$
    |\Av_{n}(123, 132^{\star})|=|\Av_{n}(132, 123^{\star})|.
    $$
\end{theorem}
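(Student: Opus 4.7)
The plan is to deduce \Cref{bijection} essentially immediately from \Cref{thm1} and \Cref{thm2} by exploiting the fact that West's bijection matches permutations with equal signatures. First I would recall from \Cref{west_bij} that each permutation in $\Av_n(123)$ is uniquely determined by $\mathfrak{S}(\cdot\,; 123)$, each permutation in $\Av_n(132)$ is uniquely determined by $\mathfrak{S}(\cdot\,; 132)$, and the two image sets of these signature maps coincide. This yields a bijection $\varphi : \Av_n(123) \to \Av_n(132)$ with the property that $\mathfrak{S}(\varphi(x); 132) = \mathfrak{S}(x; 123)$ for every $x \in \Av_n(123)$.

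Next I would isolate the relevant condition on a word of nonnegative integers and give it a name, say property (P): a word $w_1 w_2 \cdots w_{n-1}$ satisfies (P) if there exists $1 \le i \le n-2$ with $w_i = w_{i+1} \le w_{i+2}$. By \Cref{thm1}, for $x \in \Av_n(123)$ the membership $x \in \Av_n(123,132^{\star})$ is equivalent to $\mathfrak{S}(x;123)$ \emph{failing} (P). Symmetrically, by \Cref{thm2}, for $y \in \Av_n(132)$ the membership $y \in \Av_n(132,123^{\star})$ is equivalent to $\mathfrak{S}(y;132)$ failing (P). Since (P) depends only on the signature word, the bijection $\varphi$ sends $x$ satisfying the first failure to $\varphi(x)$ satisfying the second failure, and vice versa. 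Hence $\varphi$ restricts to a bijection between $\Av_n(123,132^{\star})$ and $\Av_n(132,123^{\star})$, giving the asserted equality of cardinalities.

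There is no substantive obstacle once \Cref{thm1} and \Cref{thm2} are in place; the whole statement is a direct transfer through West's signature-preserving bijection. The only bookkeeping point worth verifying is that both characterizations are phrased in terms of the \emph{same} syntactic pattern on the signature word (the two inner equalities and the final inequality, in the same positions), so that "$\mathfrak{S}(x;123)$ satisfies (P)" and "$\mathfrak{S}(\varphi(x);132)$ satisfies (P)" are literally the same assertion about the common word $\mathfrak{S}(x;123) = \mathfrak{S}(\varphi(x);132)$.
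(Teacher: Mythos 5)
Your proposal is correct and takes essentially the same route as the paper, which likewise presents \Cref{bijection} as an immediate corollary of \Cref{thm1} and \Cref{thm2}: West's bijection preserves signatures, and both theorems characterize containment of the starred pattern by the identical condition on the signature word, so the bijection restricts to the two avoidance classes.
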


We are now able to give a generating function for the sequence $|\Av_{n}(132, 123^{\star})|$. Let $$f_{n} = |\{x \in \Av_{n}(132) \mid \indx_{x}(n)-1 = \indx_{x}(n-1) > 1, \smallx_{n-1}(x) \in \Av_{n-1}(123^{\star})\}|,$$ and let $g_{n}  = |\Av_{n}(132, 123^{\star})|$.

\begin{lemma} \label{recursion1}
    For all $n\geq 2$, it holds that $f_n = g_{n-1}-g_{n-2}$. 
\end{lemma}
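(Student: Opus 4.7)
The plan is to identify $f_n$ with the number of permutations in $\Av_{n-1}(132, 123^\star)$ whose maximum $n-1$ does not occupy the first position, and then to identify the complementary count (those permutations starting with $n-1$) with $g_{n-2}$, so that subtraction yields $g_{n-1} - g_{n-2}$.

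For the first step, I would use the map $\Phi(x) = \smallx_{n-1}(x)$ that deletes the entry $n$ from $x$. If $x$ is counted by $f_n$, then $\Phi(x)$ inherits $132$-avoidance from $x$, lies in $\Av_{n-1}(123^\star)$ by hypothesis, and satisfies $\indx_{\Phi(x)}(n-1) = \indx_x(n-1) > 1$ because $n$ sits immediately to the right of $n-1$ in $x$. The candidate inverse sends any $y \in \Av_{n-1}(132, 123^\star)$ with $\indx_y(n-1) > 1$ to the permutation obtained by inserting $n$ immediately to the right of $n-1$ in $y$. The one verification I would carry out is that this insertion stays in $\Av_n(132)$: a putative $132$ occurrence $x_a x_b x_c$ in the inserted word must have $x_b = n$ (the only role the inserted maximum can play), whence $x_{b-1} = n-1$ and $x_c < n-1$, so that $x_a x_{b-1} x_c$ is already a $132$ occurrence in $y$---contradicting $y \in \Av_{n-1}(132)$. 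This gives $f_n = |\{y \in \Av_{n-1}(132, 123^\star) : \indx_y(n-1) > 1\}|$.

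For the second step, I would show that removing the first entry is a bijection from $\{y \in \Av_{n-1}(132, 123^\star) : y_1 = n-1\}$ onto $\Av_{n-2}(132, 123^\star)$. Since $y_1 = n-1$ is the maximum of $y$ and sits at the leftmost position, it cannot participate in any occurrence of $132$ or of $123^\star$ in $y$: such an involvement would require either another entry exceeding $n-1$ or a position strictly to the left of position one, both impossible. Thus the patterns present in $y$ coincide with those present in $y_{[2, n-1]}$, and prepending $n-1$ to an arbitrary element of $\Av_{n-2}(132, 123^\star)$ provides the inverse. Combining the two steps produces $f_n = g_{n-1} - g_{n-2}$.

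The main (modest) obstacle is the $132$-avoidance check for the insertion step; everything else is direct bookkeeping using the fact that the inserted maximum $n$ sits immediately after $n-1$.
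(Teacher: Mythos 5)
Your proposal is correct and follows essentially the same route as the paper: delete $n$ (which sits immediately after $n-1$) to land in $\Av_{n-1}(132,123^{\star})$, then account for the excluded case $\indx(n-1)=1$ by a bijection with $\Av_{n-2}(132,123^{\star})$, giving $g_{n-1}-g_{n-2}$. The only cosmetic difference is that the paper performs the subtraction at the level of length-$n$ permutations before deleting $n$, whereas you delete first and subtract afterwards.
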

\begin{proof}
It is straightforward from the definitions of $f_n$ and $g_n$ that 
\begin{align*}
f_n &=  |\{x \in \Av_{n}(132)  \mid \indx_{x}(n)-1 = \indx_{x}(n-1), \smallx_{n-1} (x) \in \Av_{n-1}(123^{\star})\}| \\ & \hspace{20mm} - |\{x \in \Av_{n}(132)  \mid x_1 = n-1, x_2 = n, \smallx_{n-2}(x) \in \Av_{n-2}(123^{\star})\}|  \\ &= |\{x\in \Av_{n-1}(132)  \mid x \in \Av_{n-1}(123^{\star})\}| - |\{x\in \Av_{n-2}(132)  \mid  x \in \Av_{n-2}(123^{\star})\}| \\ &= |\Av_{n-1}(132, 123^{\star})| - |\Av_{n-2}(132, 123^{\star})| =g_{n-1}-g_{n-2}. \qedhere
\end{align*}
\end{proof}
Next, we give another recursive relation between $f_n$ and $g_n$.
\begin{lemma} \label{recursion2}
    For $n \geq 2$, it holds that $g_{n-1} g_{0} + g_{n-2} g_{1} + \dots g_{0}g_{n-1} = g_{n} + f_{n}$. 
\end{lemma}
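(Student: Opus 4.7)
The plan is to prove the identity by decomposing a $132$-avoiding permutation $x \in S_n$ according to the position $k = \indx_x(n)$ of its maximum entry. Recall the standard fact that if $x \in \Av_n(132)$ and $\indx_x(n) = k$, then $\{x_{[1,k-1]}\} = \{n-k+1, \ldots, n-1\}$ and $\{x_{[k+1,n]}\} = \{1, \ldots, n-k\}$; let $L$ denote the normalized left block (of size $k-1$) and $R$ the right block (of size $n-k$), both automatically in $\Av(132)$. The convolution $\sum_{k=0}^{n-1} g_{n-1-k} g_k = \sum_{k=1}^{n} g_{k-1}\, g_{n-k}$ is precisely the count of pairs $(L, R)$ with $L \in \Av_{k-1}(132, 123^{\star})$ and $R \in \Av_{n-k}(132, 123^{\star})$.

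Next I would determine when such a pair $(L, R)$ corresponds to a permutation $x \in \Av_n(132, 123^{\star})$. A short case analysis on an occurrence $x_i x_j x_{j+1}$ of $123^{\star}$ shows that: patterns with $j+1 \le k-1$ arise exactly from $123^{\star}$ in $L$; patterns with $j \ge k+1$ arise exactly from $123^{\star}$ in $R$; no pattern can straddle the two blocks because entries in the $L$-block are strictly larger than entries in the $R$-block; and the remaining case $j = k-1$, $j+1 = k$ forces $x_{k-1} = n-1$, $x_k = n$, and requires some earlier $x_i < n-1$, which exists iff $k \ge 3$. Consequently, among the $\sum_{k=1}^{n} g_{k-1} g_{n-k}$ pairs, exactly those with $k \ge 3$ and $L_{k-1} = k-1$ fail to produce an element of $\Av_n(132, 123^{\star})$, and everyone else is counted once. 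This gives
\[
\sum_{k=0}^{n-1} g_{n-1-k} g_k = g_n + E_n, \qquad E_n := \sum_{k=3}^{n} \bigl|\{L \in \Av_{k-1}(132, 123^{\star}) : L_{k-1} = k-1\}\bigr| \cdot g_{n-k},
\]
and it remains to show $E_n = f_n$.

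For this final identification, I would construct a bijection from the overcounted pairs to $F_n := \{x \in \Av_n(132) : \indx_x(n)-1 = \indx_x(n-1) > 1,\ \smallx_{n-1}(x) \in \Av_{n-1}(123^{\star})\}$. Given a pair $(L, R)$ with $L_{k-1} = k-1$ and $k \ge 3$, form $x = (L + (n-k))\, n\, R$; then $\indx_x(n) = k \ge 3$ and $x_{k-1} = n-1$, so $\indx_x(n-1) = k-1 > 1$, and $x \in \Av_n(132)$ by the block-decomposition argument. Conversely, any $x \in F_n$ admits the same decomposition with $L_{k-1} = k-1$ forced by $x_{k-1} = n-1$. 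The map $(L,R) \leftrightarrow x$ is clearly inverse-to-each-other once one verifies that $\smallx_{n-1}(x)$ avoids $123^{\star}$ if and only if both $L$ and $R$ do.

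The main obstacle is this last equivalence: I must rule out $123^{\star}$ patterns in $\smallx_{n-1}(x)$ sitting at the interface between the blocks $L + (n-k)$ and $R$ (since $n$ has been removed, the descent $n-1, R_1$ between them is now adjacent). The saving observation is that every entry in $L + (n-k)$ is at least $n-k+1$, while every entry in $R$ is at most $n-k$, so the drop from position $k-1$ to position $k$ in $\smallx_{n-1}(x)$ is $\ge 2$; this prevents both an adjacent consecutive pair $(y_{k-1}, y_k)$ of the form $(m, m+1)$ and a $123^{\star}$ with $i$ in the $L$-block and $j, j+1$ in the $R$-block. With this in hand, the case analysis from the previous paragraph applies verbatim to $\smallx_{n-1}(x)$, giving the desired equivalence, hence $E_n = f_n$, and the lemma follows.
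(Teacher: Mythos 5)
Your proof is correct and follows essentially the same route as the paper's: both arguments rest on decomposing a $132$-avoiding permutation at the position of $n$, observing that the convolution $\sum_i g_i g_{n-1-i}$ counts $\{x \in \Av_n(132) : \smallx_{n-1}(x) \in \Av_{n-1}(123^{\star})\}$, and noting that the only $123^{\star}$ occurrences not inherited from the blocks are those of the form $x_i \cdot (n-1) \cdot n$, which is exactly the condition $\indx_x(n)-1 = \indx_x(n-1) > 1$ defining $f_n$. You simply make explicit the interface case analysis that the paper leaves implicit.
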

\begin{proof} For each $x \in \Av_{n}(132)$ such that $\smallx_{n-1}(x) \in \Av_{n-1}(123^{\star})$, either $x \in \Av_{n}(123^{\star})$ or $\indx_{x}(n)-1 = \indx_{x}(n-1) > 1$. Thus, 
     \[f_n + g_n = |\{x \in \Av_n(132) \mid \smallx_{n-1} (x) \in \Av_{n-1}(123^{\star})\}|.\]
    At the same time, for each $0 \leq i \leq n-1$,
    \begin{align*}
        g_{i} g_{n-1-i} &= |\{x_{[1, i]} + (i+1-n) \in \Av_{i}(132) \mid x_{[1, i]} + (i+1-n)  \in \Av_{i}(123^{\star})\}| \\ & \hspace{35mm} \cdot |\{x_{[i+2, n]} \in \Av_{n-1-i}(132) \mid x_{[i+2, n]} \in \Av_{n-1-i}(123^{\star})\}| \\ & =  |\{x \in \Av_n(132) \mid \smallx_{n-1} (x) \in \Av_{n-1}(123^{\star}), x_{i+1} = n\}|. 
    \end{align*}
Therefore, $\sum_{i=0}^{n-1} g_{i} g_{n-1-i} = |\{x \in \Av_n(132) \mid \smallx_{n-1} (x) \in \Av_{n-1}(123^{\star})\}| = f_n + g_n$.
\end{proof}

The main result of this section, \Cref{main}, follows immediately.

\begin{proof}[Proof of \Cref{main}]
    By Theorems \ref{char} and \ref{bijection}, $|\Sort_{n}(132, 321)| = |\Av_{n}(123, 132^{\star})| = |\Av_{n}(132, 123^{\star})|$. Now, let $G(z)=\sum_{n\ge 0}g_nz^n$. By Lemmas \ref{recursion1} and \ref{recursion2},
    $$
    g_n+g_{n-1}-g_{n-2}=\sum_{i=0}^{n-1}g_ig_{n-1-i},
    $$
    from which
    \begin{align*}
              G(z)-z-1+z(G(z)-1)-z^2G(z) & = zG(z)^2-z \\
        \iff\ zG(z)^2+(z^2-z-1)G(z)+z+1 & =0.
    \end{align*}
    Thus,
    \begin{equation*}
        \frac{(1+z-z^2) - \sqrt{1-2z-5z^2-2z^3+z^4}}{2z}
    \end{equation*}
    is the generating function for $g_n = |\Av_{n}(132, 123^{\star})| = |\Sort_{n}(132, 321)|$. The statement of the theorem now follows from OEIS A102407~\cite{oeis}. 
\end{proof}

\subsection{A bijection with pattern-avoiding Dyck paths} \label{Dyck}

The OEIS A102407~\cite{oeis} counts the set of Dyck paths of semilength $n$ that do not contain the word $dudu$ as a factor. We give an alternative proof of \Cref{main} by showing that $\Av_n(123,132^{\star})$ is in bijection with the set of Dyck paths of semilength $n$ that do not contain the word $dudu$ as a factor. We sketch the construction below and leave the details to the reader.

A \emph{Dyck path} is a path in the discrete plane $\mathbb{Z}\times \mathbb{Z}$ starting at the origin, ending on the $x$-axis, never falling below the $x$-axis and using two kinds of steps---up steps $u=(1,1)$ and down steps $d=(1,-1)$. The \emph{semilength} of a Dyck path is the total number of its up steps, which is also the number of its down steps. 

The \emph{ltr-min decomposition} of a permutation $x$ with ltr minima $m_1,m_2,\dots,m_k$ is
$$
x=m_1B_1m_2B_2\cdots m_kB_k,
$$
where $B_j$ contains the entries between two ltr minima $m_j$ and $m_{j+1}$ for $1 \le j\le k-1$, and $B_k$ contains the entries to the right of $m_k$. The ltr-minima decomposition of $x$ induces a \emph{grid decomposition}~\cite{CERBAI2020P3.32} of $x$, in which
$$
H_i=\left\lbrace y \in \{ 1,2,\ldots ,n\} :m_i < y < m_{i-1} \right\rbrace
$$
is the $i$-th \emph{horizontal strip} of $x$ (note, $m_0=+ \infty$), and $B_j$ is the $j$-th \emph{vertical strip} of $x$. For any two indices $i,j$, the \emph{cell} of indices $i,j$ of $x$ is $C_{i,j}=H_i\cap B_j$. It is clear that $C_{i,j} = \emptyset$ if $i>j$. The grid decomposition of $x=8 \,11 \,6 \,10 \,4 \,9 \,7 \,5 \,3 \,1 \,2$ is depicted in \Cref{grid}. We first prove that each cell of a $123$-avoiding permutation $x$ contains at most one number if and only if $x$ is also $132^{\star}$-avoiding.
  
\begin{proposition} \label{ltrmin dec}
If $x\in\Av_n(123)$, then $x \in \Av_{n}(132^{\star})$ if and only if $|C_{i,j}| \leq 1$ for all $i, j$.
\end{proposition}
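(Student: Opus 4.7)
Both directions hinge on a single structural observation: since $x\in\Av_n(123)$, Lemma \ref{ltr2} forces the non-ltr-minima of $x$ to appear in strictly decreasing order as one reads $x$ from left to right. In particular, each vertical strip $B_j$ is a block of consecutive positions whose entries strictly decrease, so every cell $C_{i,j}=B_j\cap H_i$ consists of a decreasing substring of $B_j$ in consecutive positions of $x$. I also note that $C_{i,j}$ can be nonempty only when $j\geq i$, in which case the ltr-minimum $m_i$ necessarily appears before the block $B_j$ in $x$.

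For the ``only if'' direction I plan to argue by contrapositive: assume some $C_{i,j}$ has at least two entries and write them in position order as $a_1>a_2>\cdots>a_t$, occupying adjacent positions of $x$. The key step is to produce a pair $a_\ell,a_{\ell+1}$ whose values differ by exactly $1$. If no such pair existed, I would pick any $\ell$ with $a_\ell>a_{\ell+1}+1$ and examine $v=a_{\ell+1}+1$; since $m_i<v<m_{i-1}$, the value $v$ lies in $H_i$ and hence cannot be an ltr-minimum. Being a non-ltr-min with $a_{\ell+1}<v<a_\ell$, Lemma \ref{ltr2} would force $v$ to appear strictly between $a_\ell$ and $a_{\ell+1}$ in $x$, contradicting their adjacency. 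Once such a pair $a_\ell,a_{\ell+1}$ with $a_\ell=a_{\ell+1}+1$ is secured, the triple $(m_i,a_\ell,a_{\ell+1})$ is a $132^\star$ occurrence because $m_i<a_{\ell+1}$ and $m_i$ appears before $B_j$.

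For the ``if'' direction I would take the contrapositive again: suppose $x_qx_px_{p+1}$ is a $132^\star$ occurrence, so $x_q<x_{p+1}<x_p$, $q<p$, and $x_p=x_{p+1}+1$. The presence of the smaller earlier value $x_q$ prevents either $x_p$ or $x_{p+1}$ from being an ltr-minimum, so they are adjacent non-ltr-mins with no ltr-minimum separating them in position; hence they lie in the same vertical strip $B_j$. Because they differ by $1$ in value, no ltr-minimum value can sit in the closed interval $[x_{p+1},x_p]$, so $x_p$ and $x_{p+1}$ share a horizontal strip $H_i$, placing them in a common cell $C_{i,j}$ with at least two entries.

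The main obstacle I anticipate is the ``adjacent pair with values differing by $1$'' step in the ``only if'' direction. Everything else reduces to short bookkeeping about the ltr-min decomposition, but this step genuinely requires combining the decreasing order of non-ltr-minima (Lemma \ref{ltr2}) with the fact that, by construction, $H_i$ contains no ltr-minimum.
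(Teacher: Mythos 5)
Your proof is correct and follows essentially the same route as the paper's: both directions rest on the observations that the entries of a cell occupy consecutive positions and hence consecutive values, so a doubly occupied cell yields the occurrence $m_i a_\ell a_{\ell+1}$ of $132^{\star}$, and conversely an occurrence of $132^{\star}$ forces two entries into one cell. Your explicit appeal to Lemma~\ref{ltr2} to force the adjacent pair to differ by exactly one in value is just a spelled-out version of the paper's remark that the entries of $H_i$ appear in decreasing order.
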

\begin{proof}
Suppose that $|C_{i,j}| \geq 2$. We prove that $x$ contains $132^{\star}$. First, the numbers in $B_j$ must all be greater than $m_j$ and appear in decreasing order in $x$, because $x \in \Av_{n}(123)$. Therefore, the two leftmost entries of $x$ that are in $C_{i, j}$, say $x_{\ell}$ and $x_{\ell+1}$, must be adjacent in $x$. Similarly, once again since $x\in\Av(123)$, the numbers in $H_i$ must appear in decreasing order, and thus, $x_{\ell}=x_{\ell+1}+1$. As a result, $m_ix_\ell x_{\ell+1}$ is an occurrence of $132^{\star}$ in $x$.


Conversely, if $x_k x_{\ell}x_{\ell+1}$ is an occurrence of $132^{\star}$, then neither $x_\ell$ nor $x_{\ell+1}$ can be an ltr minimum of $x$, and thus, $x_{\ell}$ and $x_{\ell+1}$ are in the same cell.
\end{proof}

As a result, the grid decomposition of a $(123,132^{\star})$-avoiding permutation satisfies the following two properties:
\begin{itemize}
\item Each cell $C_{i,j}$ contains at most one point;
\item If a cell $C_{i,j}$ is not empty, then all the cells located strictly northeast of $C_{i,j}$ (i.e. all the cells $C_{u,v}$, with $u<i$ and $v>j$) are empty.
\end{itemize}

\begin{figure}
\centering
\begin{tikzpicture}[scale=1, baseline=20.5pt]
      \fill[NE-lines] (3,2) rectangle (4,3);
      \node[scale=1] at (3.5,2.5) {$C_{3,4}$};
      \draw [semithick] (4,0) grid (5,5);
      \draw [semithick] (3,1) grid (4,5);
      \draw [semithick] (2,2) grid (3,5);
      \draw [semithick] (1,3) grid (2,5);
      \draw [semithick] (0,4) grid (1,5);
      \filldraw (0,4) circle (2pt);
      \filldraw (0.5,4.75) circle (2pt);
      \filldraw (1,3) circle (2pt);
      \filldraw (1.5,4.5) circle (2pt);
      \filldraw (2,2) circle (2pt);
      \filldraw (2.25,4.25) circle (2pt);
      \filldraw (2.5,3.5) circle (2pt);
      \filldraw (2.75,2.5) circle (2pt);
      \filldraw (3,1) circle (2pt);
      \filldraw (4,0) circle (2pt);
      \filldraw (4.5,0.5) circle (2pt);
      \node[scale=1] at (0.5,5.5) {$B_1$};
      \node[scale=1] at (1.5,5.5) {$B_2$};
      \node[scale=1] at (2.5,5.5) {$B_3$};
      \node[scale=1] at (3.5,5.5) {$B_4$};
      \node[scale=1] at (4.5,5.5) {$B_5$};
      \node[scale=1] at (5.5,4.5) {$H_1$};
      \node[scale=1] at (5.5,3.5) {$H_2$};
      \node[scale=1] at (5.5,2.5) {$H_3$};
      \node[scale=1] at (5.5,1.5) {$H_4$};
      \node[scale=1] at (5.5,0.5) {$H_5$};
      \node[below] at (0,4){$m_1$};
      \node[below] at (1,3){$m_2$};
      \node[below] at (2,2){$m_3$};
      \node[below] at (3,1){$m_4$};
      \node[below] at (4,0){$m_5$};
\end{tikzpicture}
\hspace{2cm}
\begin{tikzpicture}[scale=0.5, baseline=20.5pt]
	\draw [semithick] (0,11)--(1,11)--(1,10)--(3,10)--(3,9)--(5,9)--(5,8)--(6,8)--(6,6)--(7,6)--(7,4)--(10,4)--(10,1)--(11,1)--(11,0);
	\draw [dotted] (0,11)--(11,0);
	\node[scale=1] at (-1,11) {11};
	\node[scale=1] at (-1,10) {10};
	\node[scale=1] at (-1,9) {9};
	\node[scale=1] at (-1,8) {8};
	\node[scale=1] at (-1,7) {7};
	\node[scale=1] at (-1,6) {6};
	\node[scale=1] at (-1,5) {5};
	\node[scale=1] at (-1,4) {4};
	\node[scale=1] at (-1,3) {3};
	\node[scale=1] at (-1,2) {2};
	\node[scale=1] at (-1,1) {1};
	\node[scale=1] at (-1,0) {0};
	\filldraw (0,11) circle (3pt);
	\filldraw (1,11) circle (3pt);
	\filldraw (1,10) circle (3pt);
	\filldraw (2,10) circle (3pt);
	\filldraw (3,10) circle (3pt);
	\filldraw (3,9) circle (3pt);
	\filldraw (4,9) circle (3pt);
	\filldraw (5,9) circle (3pt);
	\filldraw (5,8) circle (3pt);
	\filldraw (6,8) circle (3pt);
	\filldraw (6,7) circle (3pt);
	\filldraw (6,6) circle (3pt);
	\filldraw (7,6) circle (3pt);
	\filldraw (7,5) circle (3pt);
	\filldraw (7,4) circle (3pt);
	\filldraw (8,4) circle (3pt);
	\filldraw (9,4) circle (3pt);
	\filldraw (10,4) circle (3pt);
	\filldraw (10,3) circle (3pt);
	\filldraw (10,2) circle (3pt);
	\filldraw (10,1) circle (3pt);
	\filldraw (11,1) circle (3pt);
	\filldraw (11,0) circle (3pt);
\end{tikzpicture}
\caption{On the left, the grid decomposition of $x=8 \,11 \,6 \,10 \,4 \,9 \,7 \,5 \,3 \,1 \,2$. On the right, the corresponding Dyck path (rotated clockwise).}\label{grid}
\end{figure}
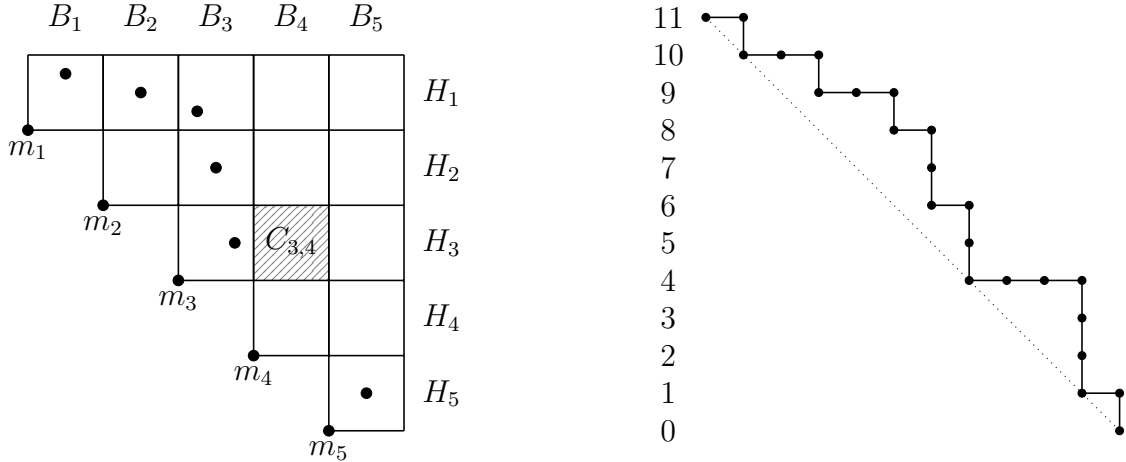

Rotem~\cite{CLKI2008,ROTEM1975} bijected $321$-avoiding permutations with Dyck paths. The Rotem bijection can be reformulated for $123$-avoiding permutations by composing the original map with the complement as follows. Let $x\in\Av_n(123)$. Then construct a sequence $b=b_1b_2\dots b_n$ by letting $b_1=n$, and for $i\ge 2$,
$$
b_i=\begin{cases}
b_{i-1} & \text{if $x_i$ is a ltr minimum of $x$;}\\
x_i-1 & \text{otherwise}.
\end{cases}
$$
For example, the sequence arising from the permutation of \Cref{grid}
$$
x=8 \cdot 11\cdot 6\cdot10\cdot4\cdot9\cdot7\cdot5\cdot3\cdot1\cdot2
\quad\text{is}\quad
b=11\cdot10\cdot10\cdot9\cdot9\cdot8\cdot6\cdot4\cdot4\cdot4\cdot1.
$$
Then represent the sequence $b$ as a bar diagram by drawing a horizontal step of length one starting at $(i-1,b_i)$, for $i=1,\dots,n$, and then connecting the ending point of each step with the starting point of the following one. The bar diagram of $b$ is illustrated in \Cref{grid}. Finally, rotate the bar diagram counterclockwise to arrive at a Dyck path. For example, the Dyck path associated with $b$ in \Cref{grid} is $udu^2du^2dud^2ud^2u^3d^3ud$. 

Now, let $x\in\Av_n(123)$, and let $P$ be the Dyck path associated with $x$ via the reformulated Rotem bijection. Note that $P$ contains $dudu$ as a factor if and only if, in the grid decomposition of $x$, the points corresponding to the up steps in the occurrence of $dudu$ lie in the same cell; instead of giving a formal proof of this fact, we refer the reader to \Cref{dudu}. By \Cref{ltrmin dec}, this is in turn equivalent an occurrence of $132^{\star}$ in $x$. In other words, Rotem bijection maps $(123,132^{\star})$-avoiding permutations to Dyck paths that do not contain $dudu$ as a factor; the desired bijection between $\Sort_n(132,321)$ and Dyck paths avoiding $dudu$ now follows immediately from \Cref{char}.

\begin{figure}
\begin{tikzpicture}[scale=1, baseline=20.5pt]
	\draw [semithick] (0,2)--(0,1)--(1,1)--(1,0)--(2,0);
	\filldraw (0,2) circle (3pt);
	\filldraw (0,1) circle (3pt);
	\filldraw (1,1) circle (3pt);
	\filldraw (1,0) circle (3pt);
	\filldraw (2,0) circle (3pt);
	\node[above] at (0.5,1){$\pi_{\ell}$};
	\node[above] at (1.5,0){$\pi_{\ell+1}$};
	\node at (2.75,1){$\iff$};
	\node[right] at (3.5,1){$\begin{cases}
	\pi_{\ell}=\pi_{\ell+1}+1;\\
	\pi_{\ell}\text{ is not a ltr minimum}.
	\end{cases}$};
        \node at (9,1){$\iff$};
        \node[above] at (10,0.5) {$m_i$};
        \filldraw (10,0.5) circle (3pt);
        \draw[dotted] (10,0.5) -- (11.5,0.5);
        \draw[semithick] (11.5,0.5)--(13,0.5)--(13,2)--(11.5,2)--(11.5,0.5);
        \filldraw (12,1.5) circle (3pt);
        \filldraw (12.5,0.75) circle (3pt);
        \node[above] at (12,1.5) {$\pi_{\ell}$};
        \node[above]  at (12.5,0.75) {$\pi_{\ell+1}$};
\end{tikzpicture}
\caption{An occurrence of $dudu$ in (the bar diagram of) a Dyck path corresponds to an occurrence of $132^{\star}$ in the associated permutation.}\label{dudu}
\end{figure}
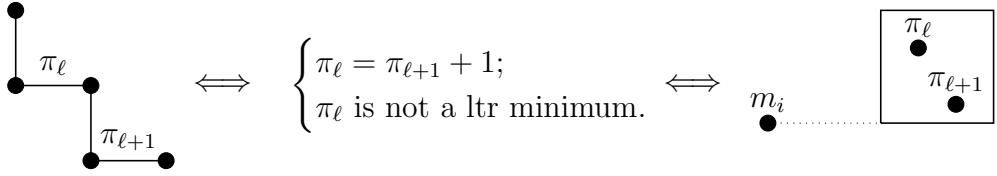

\section{The $(123,321)$-machine} \label{section_123_321}

In this section, we enumerate the set $\Sort_n(123,321)$ of permutations sorted by the $(123,321)$-machine. A direct computation shows that the sequence $|\Sort_n(123,321)|$, $n\ge 1$, starts with $1,2,4,7$. Our goal is to prove that, for $n\ge 5$, this sequence obeys the simple recursion
$$
|\Sort_n(123,321)|=2|\Sort_{n-1}(123,321)|.
$$
We start by proving that $(123,321)$-sortable permutations avoid $123$.

\begin{proposition} \label{123avoid}
If $x \in \Sort_n (123,321),$ then $x \in \Av_n (123)$.
\end{proposition}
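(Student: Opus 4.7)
The plan is to prove the contrapositive, mirroring the strategy of Proposition~\ref{avoid_123}: assume that $x$ contains a $123$ pattern and exhibit a $231$ pattern in $y := s_{123,321}(x)$. By Knuth's Lemma~\ref{knuth}, this will contradict $x \in \Sort_n(123,321)$.

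The key structural observation is that any sequence avoiding both $123$ and $321$ has length at most $4$ by Erd\H{o}s--Szekeres, and in particular, the $(123,321)$-stack cannot simultaneously contain three values $v_1 < v_2 < v_3$: read top-to-bottom they would spell out the forbidden $321$. Hence for any $123$ pattern $x_a x_b x_c$ in $x$ (meaning $a<b<c$ with $x_a<x_b<x_c$), at the moment $x_c$ is successfully pushed onto the stack, at least one of $x_a, x_b$ must already have been popped.

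I would choose a specific $123$ pattern $x_a x_b x_c$ according to the following priority: first take $c$ minimal (so $x_{[1,c-1]}$ avoids $123$), then for this $c$ maximize the value $x_b$ among valid choices, and finally take $x_a$ to be a left-to-right minimum of $x$ with $a < b$ and $x_a < x_b$ that is still in the stack at the moment $x_c$ is about to be pushed. Granted such a choice, because $a<b$ forces $x_b$ to sit above $x_a$ whenever both are in the stack, the structural observation above implies $x_b$ is \emph{not} in the stack when $x_c$ arrives; so $x_b$ has already been popped, appearing before $x_c$ in $y$. Then $x_c$ is pushed above $x_a$ and, by LIFO, pops before $x_a$, so $x_c$ appears before $x_a$ in $y$. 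Thus the triple appears in $y$ in the order $x_b, x_c, x_a$, forming a $231$ pattern (since $x_a < x_b < x_c$), as desired.

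The main obstacle is the existence of such an $x_a$ that persists in the stack until $x_c$ is about to be pushed. Unlike in the $(132,321)$-machine, where Lemma~\ref{ltr} guarantees every ltr minimum stays at the bottom of the stack until the end of input, the $(123,321)$-stack can evict ltr minima mid-process---for instance, processing $x = 3\,5\,1\,6$ through the $(123,321)$-machine pops the ltr minimum $1$ when $6$ arrives. The existence argument would rely on the minimality of $c$, the maximality of $x_b$ (which forces every entry $x_i$ with $b < i < c$ to satisfy $x_i \leq x_b$ or $x_i \geq x_c$), and the stack-size bound of $4$; a careful case analysis of the few values that can co-reside in the stack immediately before $x_c$ is pushed shows that at least one entry of $x_{[1, b-1]}$ of value strictly less than $x_b$ must still be present.
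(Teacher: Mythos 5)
Your setup is sound and your diagnosis of the difficulty is exactly right, but the proposal stops precisely where the real work begins. The paper's proof also argues by contradiction from an extremal occurrence $x_i x_j x_k$ of $123$ (lexicographically smallest, so that $x_i$ is a ltr minimum), and its first step is the same as yours: if $x_i$ is still in the stack when $x_k$ enters, then $x_j$ must already have been popped (else $x_k x_j x_i$ would be a $321$ inside the stack), so $x_j\, x_k\, x_i$ is a $231$ in the output. The divergence is in the complementary case. You assert that, by taking $c$ minimal and $x_b$ maximal, ``a careful case analysis'' always produces a small entry $x_a$ that survives in the stack through the push of $x_c$; that analysis is never carried out, and it is the crux of the proposition. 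The paper does not attempt to force survival at all. Instead it observes that, since $x_i$ is a ltr minimum, the entries below it in the stack are decreasing and hence at most two in number, so $x_i$ can be evicted before $x_k$ arrives in only two ways: either some $x_m$ with $x_m<x_i$ arrives, in which case $x_i \cdot x_k \cdot x_m$ is a $231$ in $s_{123,321}(x)$, or the stack below $x_i$ reads $x_b x_c$ and some $x_m>x_b$ arrives, in which case $x_b\cdot x_m\cdot x_c$ is a $231$ in $s_{123,321}(x)$. Some argument of this kind is unavoidable, and without it your proof is incomplete.

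Two smaller points. First, the claim that the stack ``cannot simultaneously contain three values $v_1<v_2<v_3$'' is false as stated: a stack reading $2\,3\,1$ from top to bottom avoids both $123$ and $321$. What is true, and what you actually use, is that three entries pushed in increasing order of value cannot coexist, since they would then read as a $321$ from top to bottom. Second, it does not suffice for $x_a$ to be present ``at the moment $x_c$ is about to be pushed'': the arrival of $x_c$ may itself trigger pops, and you need $x_a$ to survive them and still lie below $x_c$ after $x_c$ is pushed, since otherwise $x_a$ precedes $x_c$ in the output and the intended $231$ disappears.
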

\begin{proof}
Suppose otherwise. Let $x_i x_j x_k$ be the lexicographically smallest occurrence of $123$ in $x$. Note that $x_i$ is a ltr minimum. Now, if $x_i$ is in the stack when $x_k$ enters the stack, $x_j x_k x_i$ is a $231$ pattern in $\mathrm{s}_{123, 321}(x)$, because $x_k x_j x_i$ is a $321$ pattern. Thus, by \Cref{knuth}, $x_i$ must leave the stack before $x_k$ enters the stack. 

Now, because $x_i$ is a ltr minimum, the stack below $x_i$ must be decreasing, since the stack is $123$-avoiding. But the stack must also be $321$-avoiding, so there can be at most two numbers below $x_i$ in the stack. Therefore, for $x_i$ to leave the stack before $x_k$ enters the stack, either $\min \{x_{[i+1, k]}\} < x_{i}$, or the stack below $x_i$ reads $x_{b}x_{c}$ and $\max \{x_{[i+1, k]}\} > x_{b}$. In the first case, $x_i  x_{k} \cdot \min \{x_{[i+1, k]}\}$ is a $231$ pattern in $s_{123, 321}(x)$, and in the second case, $x_{b} \cdot \max \{x_{[i+1, k]}\} \cdot x_c$ is a $231$ pattern in $s_{123, 321}(x)$. Hence, by \Cref{knuth}, $x \in \Av_n (123).$
\end{proof}

In analogy with what observed after \Cref{avoid_123}, we have
$$
\Sort_n(132,321)=\Sort_n(132)\cap\Av(123).
$$
The following two results show that if $x\in\Sort_n(123,321)$, then $x_1 \ge n-1$ and $x_n \leq 2$.

\begin{lemma} \label{begin}
If $x \in \Sort_n (123,321),$ then $x_1 \in \{n-1, n\}$.
\end{lemma}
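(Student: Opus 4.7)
The plan is to argue by contradiction: suppose $x \in \Sort_n(123, 321)$ with $x_1 \leq n-2$, and exhibit a $231$ pattern in $y := s_{123, 321}(x)$, contradicting \Cref{knuth}. By \Cref{123avoid}, $x \in \Av_n(123)$, so since $x_1 \leq n-2$, both $n$ and $n-1$ appear strictly after $x_1$ in $x$; $123$-avoidance forces $n$ to precede $n-1$ in $x$ (else $x_1 \cdot (n-1) \cdot n$ is a $123$), and forces every entry of $x$ strictly between position $1$ and the position of $n-1$, other than $n$ itself, to be less than $x_1$ (else such an entry would combine with $x_1$ and one of $n, n-1$ to form a $123$).

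Next I would trace $x_1$ through the $(123, 321)$-stack. A $\{123, 321\}$-avoiding stack has at most four entries at any time (by Erd\H{o}s--Szekeres), so a brief case analysis shows that no push onto a stack with $x_1$ at the bottom ever creates a forbidden pattern that involves $x_1$; hence $x_1$ remains at the bottom throughout processing and $y_n = x_1$. The main task is then to locate a $231$ pattern in $y$, which I would do by splitting on whether $n$ is still in the stack when $n-1$ arrives. In the case where $n$ is still present, after any necessary pops the entry $n-1$ is pushed above $n$ in the stack and is therefore popped before $n$ in the output, so that the values $(n-1, n, x_1)$ at their respective positions in $y$ satisfy $x_1 < n-1 < n$ and form the desired $231$ pattern. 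In the case where $n$ has already been popped before $n-1$ arrives, a stack analysis shows this requires an intermediate entry $s < x_1$ between $n$ and $x_1$ in the stack and a subsequent $x_j < s$ arriving while $n$ is on top, so that the triple $x_j, s, x_1$ is a $123$ and forces both $n$ and $s$ to be popped in succession; then $s$ appears in $y$ immediately after $n$, and since some entry less than $s$ must persist above $x_1$ in the stack until $n-1$ arrives, $n-1$ will be pushed above such an entry and popped before it, producing a $231$ pattern of the form $(s, n-1, \mathrm{small})$ in $y$.

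The main obstacle will be the detailed bookkeeping in the second case: although the specific entry $x_j$ may itself be popped before $n-1$ arrives (if an even smaller entry shows up and triggers another $123$-threat against $s$ and $x_1$), the invariant that some value strictly less than $s$ always persists above $x_1$ in the stack up to the arrival of $n-1$ needs to be maintained, which I would establish by induction on the number of pushes in the interval between $n$'s premature exit and $n-1$'s arrival, observing that every push of a value less than $s$ preserves the invariant, while no push of a value at least $s$ (but less than $x_1$) can displace a pre-existing entry less than $s$ without being placed above it.
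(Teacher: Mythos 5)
Your proposed proof is correct, but it takes a genuinely different route from the paper's. You split on whether $n$ is still in the stack when $n-1$ arrives and extract the $231$ pattern from $n-1$ together with either $x_1$ (your first case) or the small entries near the bottom of the stack (your second case); the paper instead splits on whether $x_2=n$ or $x_2<x_1$ (the only options by \Cref{123avoid}), tracks the entries $1$ and $x_1$ pinned at the bottom of the stack, deduces that the output begins with $n$ and ends with $1\cdot x_1$, and builds its $231$ from the second output entry together with $n-1$ and $1$. Your first case is clean: once $n$ reaches the top of the stack the portion below it must be increasing, so the pops triggered by $n-1$ never expel $n$ and $n-1$ lands above it. Your second case is where the real work lies, and your analysis of it is sound: a premature pop of $n$ does force the stack to read $n\cdot s\cdot x_1$ top to bottom with $s<x_1$ and the trigger to be some $x_j<s$, and the invariant you flag --- that the entry directly above $x_1$ remains below $s$ until $n-1$ arrives --- does hold and does need the maintenance argument you sketch, since that entry is only ever replaced by a strictly smaller one. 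The paper sidesteps this bookkeeping by using the distinguished entry $1$ in place of a generic small entry. One local error to fix: your justification for $y_n=x_1$ (``no push onto a stack with $x_1$ at the bottom ever creates a forbidden pattern that involves $x_1$'') is false --- if the stack reads $a\cdot x_1$ top to bottom with $a<x_1$, an incoming $z<a$ creates the occurrence $z\cdot a\cdot x_1$ of $123$ --- but the conclusion is immediate anyway, because only the top of the stack is ever popped and a singleton stack never rejects a push, so the first entry always exits last.
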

\begin{proof}
Suppose otherwise. By \Cref{123avoid}, either $x_2 = n$ or $x_2 < x_1$. If $x_2=n$, then $s_{123,321}(x)_{[n-1,n]}=n\cdot x_1$. Therefore, $(n-1) \cdot n \cdot x_1$ is a $231$ pattern in $s_{123,321}(x)$, and $x\not\in \Sort_{n}(123, 321)$ by \Cref{knuth}. Thus, $x_2 < x_1$, and as a result, the stack must only contain $x_1$ when $1$ enters the stack. Then because $x_1$ and $1$ do not exit the stack until the input permutation is empty, $s_{123,321}(x)_{[n-1, n]}= 1 \cdot x_1$. Furthermore, $s_{123,321}(x)_1 = n$, because otherwise, $s_{123,321}(x)_1 \cdot n \cdot 1$ is a $231$ pattern in $s_{123,321}(x)$, and $x \not \in \Sort_{n}(123, 321)$ by \Cref{knuth}. Now, immediately after $n$ leaves the stack, $x_{\indx_{x}(n)-1}$ must leave the stack, because $n$ cannot be part of a $321$ or $123$ pattern after the next number in the input permutation enters the stack. By \Cref{123avoid}, $x_{\indx_{x}(n)-1} \ne n-1$. Thus, $x_{\indx_x(n)-1} \cdot (n-1) \cdot 1$ is a $231$ pattern in $s_{123,321}(x)$---a contradiction by \Cref{knuth}.
\end{proof}

\begin{lemma} \label{end}
If $x \in \Sort_n (123,321)$, then $x_n \in \{1, 2\}$.
\end{lemma}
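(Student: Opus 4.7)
The strategy is to assume for contradiction that $x \in \Sort_n(123,321)$ with $x_n = k \geq 3$, and to exhibit a $231$ pattern in $s_{123,321}(x)$, contradicting \Cref{knuth}. First, \Cref{123avoid} gives $x \in \Av_n(123)$, so the values $1, 2, \dots, k-1$ must appear in $x$ in strictly decreasing order---otherwise any ascending pair among them, together with $x_n = k$, would form a $123$ pattern. In particular, $2$ appears before $1$ and $1$ appears before $k$ in $x$. The natural candidate pattern to exhibit in the output is $(2, k, 1)$, since $1 < 2 < k$.

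Next, I analyze the configuration of the stack when $1$ is pushed at time $p = \indx_{x}(1)$. For $[1, \sigma]$ to avoid $123$, the sequence $\sigma$ below $1$ must be strictly decreasing top-to-bottom; avoidance of $321$ in $\sigma$ caps its length at $2$. I split on $x_1$ using \Cref{begin}. In the main case $x_1 = n$, the element $n$ cannot be popped during processing (any push onto $[n]$ creates only a two-element stack, so no forbidden pattern appears and $n$ is not dislodged from the bottom), so the stack at time $p$ must be exactly $[1, n]$. In particular $2$ has already been popped and appears in $s_{123,321}(x)$ strictly earlier. A direct triple check shows that once the stack is $[1, n]$, each successive input is pushed and then immediately popped when the next input arrives: for distinct $w, z \in \{2, \dots, n-1\}$, the stack $[w, z, 1, n]$ contains either $(w, z, 1) = 321$ (if $w > z$) or $(w, z, n) = 123$ (if $w < z$). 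Hence $x_n = k$ is the last entry pushed onto $[1, n]$, and end-popping produces the final three output entries $x_n, 1, n$. Together with $2$ appearing strictly earlier, $(2, k, 1)$ is a $231$ pattern in $s_{123,321}(x)$, the desired contradiction.

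The case $x_1 = n-1$ follows the same template but requires more bookkeeping. Using $123$-avoidance applied to $1, x_j, n$ for positions $p < j < \indx_{x}(n)$, either $\indx_{x}(n) < p$ (so $n$ has been pushed before $1$) or $\indx_{x}(n) = p+1$ (so $n$ enters immediately after $1$); no intermediate position is possible. The main obstacle is this sub-case analysis: in the first sub-case, one must check whether the stack beneath $n$ at time $\indx_{x}(n)$ is short enough to prevent $n$ from being popped between times $\indx_{x}(n)$ and $p$, and handle both outcomes; in the second sub-case, the push of $n$ onto $[1, \sigma]$ may pop $1$ itself out when $\sigma$ has length $2$, forcing a different tracking of where $1$ ends up. I expect that in each of these further sub-cases, careful analysis of the stack evolution through the remaining inputs again yields a $231$ pattern in the output---either $(2, k, 1)$ when $2$ was popped before $1$ was pushed, or an alternative pattern (for instance with $n$ as the middle entry) when $2$ remains in the stack below $1$.
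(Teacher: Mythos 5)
Your case $x_1 = n$ is correct and complete, but the case $x_1 = n-1$ is a genuine gap: you enumerate sub-cases and then write that you ``expect'' a careful analysis of each to yield a $231$ pattern, which is a statement of hope rather than a proof, and it is precisely the harder half of the lemma. Moreover, one of the alternatives you contemplate --- that ``$2$ remains in the stack below $1$'' --- cannot occur for $n\ge 4$: when $1$ is pushed, the entries beneath it form a decreasing sequence of length at most two whose bottom entry is $x_1\in\{n-1,n\}$ (the bottom entry is never popped, since pushing onto a singleton stack creates no length-three pattern), so everything below $1$ lies in $\{n-1,n\}$ and $2$ is never below $1$. That you entertain this possibility suggests the sub-case analysis has not been pushed far enough to see how it closes.

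The idea that makes both cases short is to stop tracking the full stack evolution and instead establish two facts. First, once $1$ is in the stack it cannot be popped until the input is exhausted: the contents below $1$ are $x_1$ alone or $n\cdot(n-1)$, and for any incoming $z$ no length-three subsequence of $z\cdot 1\cdot x_1$ or of $z\cdot 1\cdot n\cdot(n-1)$ is order-isomorphic to $123$ or $321$; hence $1$ appears to the right of both $2$ and $x_n$ in $s_{123,321}(x)$. Second, $2$ must be popped before $x_n$ can be pushed, since otherwise $x_n\cdot 2\cdot 1$ would be a $321$ inside the stack; hence $2$ precedes $x_n$ in the output. Together these give the occurrence $2\cdot x_n\cdot 1$ of $231$ uniformly, with no case split on the position of $n$ and no need to know the final three output entries exactly. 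This is the paper's argument; your evolution analysis for $x_1=n$ is fine as far as it goes, but without an analogous completion for $x_1=n-1$ the lemma is not proved.
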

\begin{proof}
Suppose otherwise. By \Cref{begin}, $x_1 \in \{n-1, n\}$, and so either $x_2 < x_1$, or $x_{[1,2]} = (n-1) \cdot n$. If $x_2 < x_1$, then the stack must only contain $x_1$ when $1$ enters the stack. Otherwise, the stack reads $n \cdot (n-1)$ when $1$ enters the stack. In either case, $1$ cannot leave the stack until the input permutation is empty, and so $1$ appears to the right of both $2$ and $x_n$ in $s_{123,321}(x)$. Now, $2$ must leave the stack before $x_n$ enters the stack, because $x_n\cdot 2 \cdot 1$ is a $321$ pattern. Thus, $2\cdot x_n \cdot1$ is a $231$ pattern in $s_{123,321}(x)$---a contradiction by \Cref{knuth}. 
\end{proof}

The positions of $1$ and $2$ play an important role in our characterization of $(123,321)$-sortable permutations. From now on, assume $n\ge 5$. First we show that $n$ appears to the left of $1$ and $2$ in any $x\in\Sort_n(123,321)$. A consequence is that $2\cdot 1$ must appear as a substring in the output $s_{123,321}(x)$ of the $(123,321)$-stack.

\begin{lemma} \label{after}
If $x \in \Sort_n (123,321)$, then $\indx_x (n) < \min (\indx_x (1), \indx_x (2))$. 
\end{lemma}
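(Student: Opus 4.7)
The plan is to argue by contradiction using Lemmas \ref{begin} and \ref{end} together with Knuth's characterization in Lemma \ref{knuth}. By Lemma \ref{begin}, $x_1 \in \{n-1, n\}$; if $x_1 = n$ then $\indx_x(n) = 1$ and the conclusion is immediate, so assume $x_1 = n-1$. Suppose for contradiction that the conclusion fails. Then either (Case A) $\indx_x(1) < \indx_x(n)$, or (Case B) $\indx_x(2) < \indx_x(n) < \indx_x(1)$.

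In both cases, I first use Proposition \ref{123avoid} to force structural constraints on $x$: because $x \in \Av_n(123)$, no element of $\{2, \ldots, n-2\}$ can lie strictly between $1$ and $n$ in Case A (each would produce a $123$ together with $1$ and $n$), so $\indx_x(n) = \indx_x(1) + 1$; similarly $\indx_x(n) = \indx_x(2) + 1$ in Case B. Next, because any push onto the single-element stack $[n-1]$ yields a two-element stack that trivially avoids both $123$ and $321$, the entry $n-1 = x_1$ cannot be popped before the input is exhausted. I then claim that just before the entry $1$ (Case A) or $2$ (Case B) is pushed, the stack equals $[n-1]$: with $1$ (respectively $2$) sitting on top, the substack below must be strictly decreasing to avoid creating a $123$ and must have length at most two to avoid a $321$; since its bottom entry is $n-1$, a length-two substack would require the entry above $n-1$ to equal $n$, which has not yet been processed.

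After pushing $1$ (resp.\ $2$) and then $n$, the stack becomes $[n, 1, n-1]$ (resp.\ $[n, 2, n-1]$), which can be checked to avoid both forbidden patterns. The key further observation is that no two entries from $\{2, \ldots, n-2\}$ can simultaneously sit above $n$ in the stack: if $b_2$ lies on top of $b_1$ above $n$, then $b_2 b_1 n$ forms a $123$ when $b_2 < b_1$, while $b_2 b_1 \cdot 1$ (or $b_2 b_1 \cdot 2$ in Case B) forms a $321$ when $b_2 > b_1$. Consequently the remaining input is pushed-and-popped one entry at a time, and the final output $s_{123,321}(x)$ has the form $w \cdot n \cdot 1 \cdot (n-1)$ in Case A (respectively $w \cdot n \cdot 2 \cdot 1 \cdot (n-1)$ in Case B, using Lemma \ref{end} to pin $\indx_x(1) = n$). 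A direct count gives $|w| \geq n-4$, which is positive for $n \geq 5$. Any entry $e$ of $w$ lies in $\{2, \ldots, n-2\}$ (respectively $\{3, \ldots, n-2\}$), so $(e, n, 1)$ (respectively $(e, n, 2)$) is a $231$ pattern in $s_{123,321}(x)$, contradicting Knuth's lemma.

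The main obstacle I anticipate is the careful verification of the stack state at the critical moment, which requires using both the $123$ and $321$ avoidance constraints simultaneously together with the fact that $n$ has not yet been processed; once this is in hand, the rest of the argument (the no-coexistence-above-$n$ observation, the count $|w| \geq n-4$, and the extraction of the $231$ pattern) is straightforward bookkeeping.
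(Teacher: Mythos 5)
Your proof is correct and follows essentially the same route as the paper's: both reduce to $x_1=n-1$ via Lemma \ref{begin}, show the stack reads just $[n-1]$ when the early small entry ($1$ or $2$) enters and then $n\cdot(\text{small})\cdot(n-1)$ once $n$ enters, and extract a $231$ pattern in $s_{123,321}(x)$ to contradict Lemma \ref{knuth}. The only cosmetic difference is that the paper treats your two cases uniformly by writing the early small entry as $3-x_n$ (using Lemma \ref{end} up front) and skips the adjacency deduction $\indx_x(n)=\indx_x(3-x_n)+1$ by instead noting that any intervening entries are flushed before $n$ is pushed.
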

\begin{proof}
By \Cref{begin}, $x_1 \geq n-1$. If $x_1=n$, then the claim is immediate. Thus, suppose that $x_1=n-1$. By \Cref{end}, $x_n \in  \{1,2\}$. Now, for the sake of contradiction, suppose that $\indx_x(n) > \indx_x(3-x_n)$. Then the stack must only contain $n-1$ when $3-x_n$ enters the stack and must read $(3-x_n) \cdot (n-1)$ when $n$ enters the stack. Then $n \cdot (3-x_n) \cdot (n-1)$ cannot leave the stack until the input permutation is empty. Thus, $3 \cdot n \cdot (3-x_n)$ is a $231$ pattern in $s_{123,321}(x)$---a contradiction by \Cref{knuth}.
\end{proof}

\begin{lemma}\label{one_two}
    If $x \in \Sort_{n}(123, 321)$, then $\indx_{s_{123,321}(x)}(2) = \indx_{s_{123,321}(x)}(1)-1$. 
\end{lemma}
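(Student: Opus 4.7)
The plan is to translate the claim into a statement about the output $y := s_{123,321}(x)$ and use $y \in \Av_n(231)$, which follows from \Cref{knuth} and the hypothesis $x \in \Sort_n(123,321)$. Set $p = \indx_y(1)$. A short argument shows that $y_{[1,p-1]}$ must be strictly decreasing, for if some $a < b < p$ satisfied $y_a < y_b$ then $y_a, y_b, 1$ would be a $231$ occurrence in $y$. In particular, if $2$ belongs to $y_{[1,p-1]}$, it is necessarily the smallest entry there and hence $y_{p-1} = 2$. The lemma thus reduces to showing that $2$ appears to the left of $1$ in $y$, i.e., that $2$ is popped from the $(123,321)$-stack before $1$.

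I split into cases according to $x_n \in \{1,2\}$ (granted by \Cref{end}). When $x_n = 2$, the element $1$ is processed before $2$, and the plan is to show that $1$ is still in the stack at the moment $2$ is pushed. Once this is established, $2$ is placed above $1$, and because $2$ is the last input, the stack is then flushed from the top. The $231$-avoidance of $y$ rules out any element $u \ge 3$ sitting between $2$ and $1$ in the output (else $2,u,1$ would be a $231$ pattern in $y$), so $2$ and $1$ end up adjacent with $2$ first. To argue that $1$ cannot have been popped earlier, note that such a pop can only happen when the stack immediately below $1$ contains two elements $v_1 > v_2 \ge 3$ and the next input $z$ satisfies $z > v_1$ (so $z, v_1, v_2$ would create the forbidden $321$ in the stack). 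By \Cref{after}, $n$ was pushed before $1$; since $n$ is the global maximum, $n$ will be shown to occupy the bottom of the stack when $1$ is pushed, giving $v_1 = n$ and ruling out any such $z$.

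When $x_n = 1$, the situation is symmetric: $2$ has been pushed before $1$, and by an analogous argument $2$ is still in the stack, in fact on top, at the moment $1$ is about to be pushed. Since $1,2,n$ would create a $123$ pattern in the stack, the top element $2$ must be popped immediately before $1$ is pushed; once $1$ is pushed the input is exhausted, so $1$ is popped right after, putting $2$ immediately before $1$ in $y$.

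The chief obstacle is the structural fact, used in both cases, that the maximum $n$ remains at the bottom of the stack from its push until the input is exhausted. Making this rigorous will require a careful analysis of the admissible stack configurations, exploiting $x \in \Av_n(123)$ from \Cref{123avoid}, $x_1 \in \{n-1,n\}$ from \Cref{begin}, and \Cref{after}, to rule out any push that could force $n$ off the stack during a sortable execution; with this in hand, the configurations in which $1$ (respectively $2$) can be popped are severely restricted, and the two cases above yield the desired conclusion.
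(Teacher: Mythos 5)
Your overall strategy is essentially the paper's: locate $1$ and $2$ via \Cref{begin}, \Cref{end} and \Cref{after}, pin down the stack configuration beneath the earlier of the two, show it cannot be popped until the end, and then read off the final pops. The opening reduction via $231$-avoidance of the output (the prefix of $y$ before $1$ is decreasing, so it suffices to show $2$ precedes $1$) is a pleasant simplification not present in the paper, and your classification of when $1$ can be popped (only a $321$-type block $z>v_1>v_2$ with $v_1>v_2$ beneath $1$, since $123$-avoidance forces the stack below $1$ to be decreasing) is correct.

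However, the step you defer — and on which both cases rest — is stated incorrectly: it is \emph{not} true that ``$n$ occupies the bottom of the stack when $1$ is pushed.'' \Cref{begin} allows $x_1=n-1$, and the bottom entry of the stack is $x_1$ for the entire computation (a singleton stack accepts any push, so $x_1$ is never forced out before the input is exhausted). When $x_1=n-1$, the entry $n$ sits at best directly above $n-1$, and in the sub-case $x_2<n-1$ it can even be popped before $1$ or $2$ arrives; so the fact you propose to prove is false, and ``$v_1=n$'' is not how the blocking inputs get ruled out. The statement that is both true and sufficient — and is exactly what the paper's proof establishes — is that at the moment $3-x_n$ enters, the stack beneath it reads either $x_1$ alone or $n\cdot(n-1)$: any other entry $v$ with $3\le v<x_1$ between $3-x_n$ and $x_1$ would form the forbidden $123$ pattern $(3-x_n)\cdot v\cdot x_1$ in the stack, and \Cref{after} guarantees $n$ is not above $3-x_n$. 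With that corrected configuration, no later input $z$ can satisfy $z>v_1$ for the (at most one) decreasing pair below $3-x_n$, and the rest of your two cases goes through as you describe. So the architecture is sound, but the one nontrivial lemma you rely on needs to be replaced by this weaker (and correct) configuration statement before the argument is complete.
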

\begin{proof}
    By Lemmas \ref{begin} and \ref{end}, $x_{1} \in \{n-1, n\}$, and $x_{n} \in \{1, 2\}$. Therefore, just after $3-x_{n}$ enters the stack, the stack reads $(3-x_{n}) \cdot x_1$ or $(3-x_{n}) \cdot n \cdot (n-1)$, and $3-x_{n}$ must remain in the stack until $x_{[1, n-1]}$ has entered the stack. Furthermore, from \Cref{after}, $n$ cannot be above $3-x_n$ in the stack. Now, if $x_{n}= 1$, then $3-x_n=2$ must be the last number that leaves the stack before $1$ enters the stack; after which, $x_n =1$ must leave the stack, because the input permutation is empty. If $x_{n} =2 $, then $x_n=2$ must be right above $3-x_n = 1$ when $x_n =2$ enters the stack; after which, $x_n =2$ and $3-x_n =1$ must exit the stack in that order, because the input permutation is empty.  
\end{proof}

For the rest of this section, given a permutation $x\in S_n$ of length two or more, define $\mathrm{swap}(x)$ as the permutations obtained from $x$ by interchanging $1$ and $2$. 
Furthermore, for $1\le i\le n+1$, let $\mathrm{inc}_{i}(x)$ be the permutation obtained by inserting $1$ in the $i$-th site of $x$, and suitably rescaling the other entries. More formally, let
$$
\mathrm{inc}_{i}(x)=x'_1\dots x'_{i-1} 1 x'_{i}\dots x'_n,
$$
where $x'_j=x_j+1$. For example, $\mathrm{inc}_{2}(12543) = 213654$.
It follows from the proof of \Cref{one_two} that if $n\geq 5$, then $\mathrm{swap}$ preserves $s_{123, 321}(x)$ for $x \in \Sort_{n}(123, 321)$.  

\begin{corollary} \label{swap}
If $x \in \Sort_n (123,321)$, then $s_{123,321}(x) = s_{123,321} (\mathrm{swap}(x))$.
\end{corollary}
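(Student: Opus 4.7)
The plan is to trace the $(123,321)$-stack computations on $x$ and on $y := \mathrm{swap}(x)$ in parallel and show that they agree at every step. By \Cref{begin} and \Cref{end}, $x_1 \in \{n-1, n\}$ and $x_n \in \{1, 2\}$; set $t := \indx_x(3-x_n)$, so that $\indx_x(n) < t < n$ by \Cref{after}. The permutations $x$ and $y$ agree outside positions $t$ and $n$ (where the values $1$ and $2$ are interchanged), and every entry outside these two positions is at least $3$.

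The key claim is the following invariant: for each $k$ with $t \le k \le n-1$, after processing the first $k$ input entries, the two stacks are identical as sequences except that $x$'s stack contains $3-x_n$ at one position whereas $y$'s stack contains $x_n$ at that same position, and the outputs produced so far coincide. For the base case $k=t$, note that $x$ and $y$ agree on positions $1, \dots, t-1$, and the proof of \Cref{one_two} shows that the stack just before time $t$ is $[n]$ or $[n, n-1]$; pushing $3-x_n$ (in $x$) or $x_n$ (in $y$) on top is permitted since the resulting $3$-element stacks are at worst order-isomorphic to $132$. For the inductive step, the input $x_{k+1} = y_{k+1}$ is at least $3$, and $1$ and $2$ have the same relative order as one another with respect to every integer $\ge 3$; hence every $123$- and $321$-test performed while processing $x_{k+1}$ yields the same verdict in $x$ and $y$, so the push/pop sequences agree. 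Moreover, \Cref{one_two} guarantees that $3-x_n$ is never popped during this phase in $x$, whence by swap-equivalence $x_n$ is never popped in $y$ either.

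The real work is at time $n$, where $x$ pushes $x_n$ while $y$ pushes $3-x_n$. Denote by $a_1, \dots, a_r$ the common ``stuff above the swap position'' at the end of time $n-1$, read top to bottom. Because $n$ lies below the swap position and the stack avoids $123$, any pair $a_i, a_j$ with $i < j$ must satisfy $a_i > a_j$ (otherwise $a_i a_j n$ would be a $123$), so $a_1 > a_2 > \cdots > a_r \ge 3$. A routine check on the triples involving the value being pushed, the $a_i$'s, the swap entry, and $n$ (together with $n-1$ below if $x_1 = n-1$) shows that in both processes the push-attempt pops exactly $a_1, \dots, a_r$, after which exactly one of the two processes additionally pops the swap entry: the one that pushes $1$ onto a stack still containing $2$, since the triple $(1, 2, n)$ forces the $2$ out. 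In the other process the pushed value nests directly on top of the swap entry without popping it. In either scenario, the subsequent final stack dump produces the output suffix
$$
a_1, \dots, a_r,\ 2,\ 1,\ n,\ [n-1],
$$
where the bracketed $n-1$ appears if and only if $x_1 = n-1$. Together with the equality of outputs through time $n-1$, this yields $s_{123,321}(x) = s_{123,321}(y)$.

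The main obstacle is the case analysis at time $n$: one must verify that the apparent asymmetry between the two pushes---with one process popping the swap entry during the push-attempt and the other only during the final dump---exactly cancels, so that both processes emit the same suffix $a_1, \dots, a_r, 2, 1, n, [n-1]$.
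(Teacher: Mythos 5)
Your proposal is correct and takes essentially the same route as the paper, which states the corollary as an immediate consequence of the proof of \Cref{one_two} without writing out the details: your parallel-simulation invariant (stacks identical except for the interchanged small entry, which is never popped before time $n$) plus the case analysis at the final push is precisely the argument being invoked there. The check that both processes emit the common suffix $a_1,\dots,a_r,\,2,\,1,\,n,\,[n-1]$ is sound.
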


To complete our description of $\Sort_n(123,321)$, we shall prove that a permutation $x$ is $(123,321)$-sortable if and only if $\mathrm{inc}_{n+1}(x)$ is. 

\begin{proposition} \label{extend}
It holds that $x \in \Sort_n (123,321)$ if and only if $\mathrm{inc}_{n+1} (x) \in \Sort_{n+1} (123,321)$.
\end{proposition}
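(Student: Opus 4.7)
The plan is to trace the $(123,321)$-stack through $\mathrm{inc}_{n+1}(x)=(x+1)\cdot 1$ and compare it step by step to its trace on $x$ itself. Since the stack's forbidden patterns depend only on relative order, the processing of the length-$n$ prefix $x+1$ of $\mathrm{inc}_{n+1}(x)$ mirrors the processing of $x$ with every value shifted up by one. Concretely, write $s_{123,321}(x) = O \cdot S$, where $S$ denotes the contents of the stack (read top to bottom) immediately after $x_n$ is pushed during the processing of $x$, and $O$ denotes the output produced up to that moment. The analogous snapshot taken during the processing of $\mathrm{inc}_{n+1}(x)$, just after $x_n+1$ has been pushed, yields output $O+1$ and stack state $S+1$.

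The crux of the argument will be to understand how the trailing $1$ is inserted. Because $1$ is the global minimum, placing it on top of any $(123,321)$-avoiding stack $T$ cannot create a $321$ pattern, and it creates a $123$ pattern exactly when $T$ (read top to bottom) contains an ascent. Hence the machine must pop entries from $S+1$ until the remainder is strictly decreasing, then push $1$, then empty the stack. Writing $S+1 = P \cdot S'$ with $P$ the popped portion and $S'$ the strictly decreasing remainder, this gives the clean formula
$$
s_{123,321}\bigl(\mathrm{inc}_{n+1}(x)\bigr) = (O+1)\cdot P \cdot 1 \cdot S'.
$$

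With this formula in hand, the equivalence reduces to a short observation about $231$-avoidance. Deleting the inserted $1$ from the right-hand side recovers $(O+1)\cdot P \cdot S' = s_{123,321}(x)+1$, which is order-isomorphic to $s_{123,321}(x)$. For the backward direction, if $\mathrm{inc}_{n+1}(x)\in\Sort_{n+1}(123,321)$, then the full output avoids $231$ by \Cref{knuth}, and passing to the length-$n$ subsequence obtained by deleting the inserted $1$ preserves $231$-avoidance, so $x\in\Sort_n(123,321)$. For the forward direction, if $x\in\Sort_n(123,321)$ then $(O+1)\cdot P \cdot S'$ avoids $231$; any new $231$ pattern created by inserting $1$ must involve $1$, and since $1$ is the minimum it must play the role of the smallest letter of the pattern, which requires an ascent in the suffix $S'$. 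But $S'$ is strictly decreasing by construction, so no such pattern exists, and $\mathrm{inc}_{n+1}(x)\in\Sort_{n+1}(123,321)$ by \Cref{knuth} once more.

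The main point to verify carefully is the stopping rule for the final push, namely that a $(123,321)$-avoiding stack $T$ admits $1$ on top precisely when $T$ is strictly decreasing from top to bottom. Boundary cases (for example, $S$ consisting of a single entry, or $S+1$ already being strictly decreasing so that $P$ is empty) fit the formula uniformly, and it is worth noting that the structural lemmas established earlier in this section---\Cref{begin}, \Cref{end}, \Cref{after}, and \Cref{one_two}---are not needed for this step.
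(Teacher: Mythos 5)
Your setup is sound: the prefix $x+1$ of $\mathrm{inc}_{n+1}(x)$ is indeed processed exactly as $x$ is, the stopping rule for pushing $1$ (pop until the stack is strictly decreasing top to bottom) is correct, the resulting formula $s_{123,321}(\mathrm{inc}_{n+1}(x))=(O+1)\cdot P\cdot 1\cdot S'$ is valid, and the backward direction (delete the $1$, use closure of $231$-avoidance under subsequences) is complete. The forward direction, however, contains a genuine error. In an occurrence of $231$ the smallest entry is the \emph{last} one, so a $231$ occurrence in $(O+1)\cdot P\cdot 1\cdot S'$ that uses the inserted $1$ consists of an ascent located to the \emph{left} of $1$, i.e., in $(O+1)\cdot P$, together with $1$; the suffix $S'$ plays no role in such an occurrence, so the observation that $S'$ is strictly decreasing proves nothing. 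What you actually need is that $(O+1)\cdot P$ contains no ascent, and this does not follow from $231$-avoidance of $s_{123,321}(x)=O\cdot S$ alone: a $231$-avoiding word may well contain ascents, as long as nothing smaller follows them.

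Closing this gap requires precisely the structural information you declare unnecessary. One must know where the global minimum $1_x$ of $x$ sits: by the argument in \Cref{end}, $1_x$ stays in the stack until the input is exhausted, so $1_x\in S$ and $2\in S+1$; since the stack avoids $123$, everything below $1_x$ in $S$ is strictly decreasing, which forces $2$ to be the last letter of $P$ (or $P$ to lie entirely above $2$, with $2$ at the bottom of the stack); and then any ascent in $(O+1)\cdot P$ would combine with the later occurrence of $1_x$ in $O\cdot S$ to produce a $231$ in $s_{123,321}(x)$, contradicting sortability of $x$. This is exactly the content the paper extracts from \Cref{one_two}, which lets it write $s_{123,321}(\mathrm{inc}_{n+1}(x))$ as $s_{123,321}(x)+1$ with $1$ inserted immediately after the image of $1_x$, from which $231$-avoidance transfers cleanly. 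So your proposition-level strategy matches the paper's, and your backward direction is if anything cleaner, but the forward direction needs to be reargued with the position of $1_x$ taken into account.
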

\begin{proof}
First, suppose that $x \in \Sort_{n}(123, 321)$. Then by \Cref{knuth}, $s_{123, 321}(x) \in \Av_{n}(231)$. Now, by \Cref{one_two}, $s_{123, 321}(\mathrm{inc}_{n+1}(x)) = \mathrm{inc}_{\indx_{x}(1)+1}(s_{123, 321}(x))$. Thus, $s_{123, 321}(\mathrm{inc}_{n+1}(x)) \in \Av_{n+1}(231)$, and so $s_{123, 321}(\mathrm{inc}_{n+1}(x)) \in \Sort_{n+1}(123, 321)$ by \Cref{knuth}.

Now, suppose that $x \not \in \Sort_{n}(123, 321)$. Then by \Cref{knuth}, $s_{123, 321}(x) \not\in \Av_{n}(231)$. Now, $s_{123, 321}(\mathrm{inc}_{n+1}(x)) \not\in \Av_{n}(231)$, for $s_{123, 321}(x)+1$ is a substring of $s_{123, 321}(\mathrm{inc}_{n+1}(x))$. Thus, $s_{123, 321}(\mathrm{inc}_{n+1}(x)) \not \in \Sort_{n+1}(123, 321)$ by \Cref{knuth}.
\end{proof}

 Finally, we use Lemmas \ref{end}, \ref{swap}, and \ref{extend} to prove \Cref{secondary}.

\begin{proof}[Proof of \Cref{secondary}]
It is easy to check that the theorem statement holds for $n=4$. Now, by \Cref{swap}, $x \rightarrow \mathrm{swap}(x)$ bijects $\Sort_{n} (123, 321)$ to itself for $n \geq 5$. Thus, $|\Sort_{n} (123,321)| = 2|\{x \in \Sort_{n} (123,321) \mid x_n = 1\}|$ by \Cref{end} for $n \geq 5$. Also, by \Cref{extend}, $x \rightarrow \mathrm{inc}_{n+1} (x)$ bijects $\Sort_n (123,321)$ to $\{x \in \Sort_{n+1} (123,321) \mid x_n=1\}$ for $n \geq 5$. Thus, $|\Sort_{n}(123, 321)| = 2|\Sort_{n-1}(123, 321)|$ for $n \geq 5$ as sought. 
\end{proof}

\section{Future Directions}\label{future}

In addition to the statement of \Cref{main}, Baril, Cerbai, Khalil, and Vajnovski~\cite{BARIL2021106138} conjectured that $|\Sort_{n}(132,213)| = |\Sort_{n}(213,312)|$ for all $n \geq 1$; the conjecture remains open. Here, we refine their conjecture. 

\begin{conjecture} 
   For any $n$ and $1 \leq i \leq n$, we have the equidistributions
\begin{align*}
|\{x \in \Sort_{n}(132, 213) \mid x_1 = i \}| &= |\{x \in \Sort_n(213, 312) \mid x_1 = i\}|
\shortintertext{and}
|\{x \in \Sort_{n}(132, 213) \mid x_i = n \}| &= |\{x \in \Sort_n(213, 312) \mid x_i = n\}|.
\end{align*}
\end{conjecture}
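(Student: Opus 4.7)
The plan is to proceed in three stages that mirror the structure of Section~\ref{section_132_321}. First, I would obtain (possibly bivincular) pattern-avoidance characterizations of both $\Sort_n(132,213)$ and $\Sort_n(213,312)$. This requires analyzing the geometry of the two underlying stacks. For a $\{132,213\}$-avoiding stack $a_1\cdots a_m$ read top-to-bottom, a short case analysis over triples shows that once a descent $a_i > a_j$ appears in the stack content, every later entry $a_k$ is forced to satisfy $a_k < a_i$; combining this with analogous constraints from ascending subconfigurations yields a manageable ``staircase'' description of legal stack shapes. For a $\{213,312\}$-avoiding stack, the common feature of the two forbidden patterns is that the minimum lies in the middle, which forces each non-extremal entry to be either a strict left-to-right or a strict right-to-left maximum. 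Combining these stack shapes with the right-greedy rule and an ltr-minima analysis modelled on Lemmas~\ref{ltr}--\ref{not-ltr}, one should extract pattern sets $T_1, T_2$ with
\[
\Sort_n(132,213) = \Av_n(T_1) \quad\text{and}\quad \Sort_n(213,312) = \Av_n(T_2).
\]

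With these characterizations in hand, the second stage is to build bijections realizing the two equidistributions. A natural source of bijections is the dihedral action on permutations by reverse, complement, and reverse-complement: since $132$ and $213$ are reverse-complements of each other while $213$ and $312$ are reverses, such a symmetry already sends one pattern pair close to the other, and what remains is to repair a controlled discrepancy between $T_1$ and $T_2$. The target is a map $\varphi:\Av_n(T_1)\to\Av_n(T_2)$ obtained as a symmetry $\psi$ followed by a local correction that fixes both $x_1$ and $\indx_x(n)$. A promising refinement is to show that within each class the pair $(x_1,\indx_x(n))$ is controlled by a common ``shape parameter''---for example, the length of the maximal decreasing prefix together with the position of the first left-to-right minimum strictly below it---and then prove that this shape parameter is equidistributed across the two classes.

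The main obstacle is that a single bijection on Catalan-sized sets rarely preserves two independent linear statistics simultaneously. If a fully statistic-preserving $\varphi$ proves elusive, my fallback is to handle the two equidistributions by separate but compatible arguments: decompose each class by the position of $n$ to derive a functional equation for the bivariate generating function $\sum_x q^{x_1}t^{\indx_x(n)}$, and show the two functional equations coincide. A further alternative, inspired by Section~\ref{Dyck}, is to encode each class as a family of lattice paths on which $x_1$ and $\indx_x(n)$ become explicit path statistics (such as the height of the first peak and the position of the highest point), so that equidistribution follows from a standard path symmetry like reversal. I expect the ltr-minima analysis needed for the first stage to be the technically heaviest step, since the $\{213,312\}$-stack lacks the ``decreasing from top'' simplification that drove the $(132,321)$ arguments.
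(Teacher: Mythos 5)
This statement is the open conjecture in Section~\ref{future}: the paper offers no proof of it, and even the coarser equality $|\Sort_{n}(132,213)| = |\Sort_{n}(213,312)|$ (due to Baril, Cerbai, Khalil, and Vajnovszki) is explicitly stated to remain unresolved. So there is no ``paper proof'' to compare against, and your submission must stand on its own as a proof. It does not: what you have written is a research plan, not an argument. Every load-bearing step is left conditional --- you assert that a case analysis ``yields a manageable staircase description'' without performing it, that one ``should extract pattern sets $T_1, T_2$'' without exhibiting them, and that a bijection $\varphi$ exists ``if a fully statistic-preserving $\varphi$'' does not prove elusive, with two further unexecuted fallbacks. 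A proposal whose conclusion is reached only under a chain of unverified ``promising'' steps establishes nothing.

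Two of your guiding heuristics are also concretely suspect. First, the appeal to the dihedral symmetries (reverse, complement, reverse-complement): while these act nicely on classical avoidance classes $\Av_n(T)$, the sets $\Sort_n(\sigma,\tau)$ are defined through the composite $s\circ s_{\sigma,\tau}$, and the outer stack-sorting map $s$ does not commute with reverse or complement; so the fact that $213$ is the reverse of $312$ and the reverse-complement of $132$ gives you no map between the two sortable classes without substantial additional work --- this is precisely why the original conjecture is hard. Second, even granting characterizations $\Sort_n(132,213)=\Av_n(T_1)$ and $\Sort_n(213,312)=\Av_n(T_2)$ in the style of Theorem~\ref{char}, the refined statistics $x_1$ and $\indx_x(n)$ are exactly the content of the conjecture; asserting that they become ``explicit path statistics'' equidistributed by ``a standard path symmetry like reversal'' assumes the conclusion. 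If you want to make progress here, the honest first deliverable is the pattern characterization of one of the two classes, fully proved in the manner of Propositions~\ref{avoid_123}--\ref{sufficent}; until at least that step is carried out, there is no proof to evaluate.
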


\nocite{*}
\printbibliography

\end{document}